\documentclass[12pt]{article}
\usepackage{latexsym,color,graphicx,amsmath,amssymb,amsthm}
\usepackage[a4paper, margin=2.8cm]{geometry}
\usepackage{hyperref}
\setlength{\parskip}{2mm}
\setlength{\baselineskip}{1.7\baselineskip}

\def\R{{\mathbb R}}
\def\C{{\mathbb C}}

\def\minus{\backslash}
\DeclareMathOperator{\cl}{Cl}

\def\marrow{\marginpar[\hfill$\longrightarrow$]{$\longleftarrow$}}

\def\orit#1{\textsc{(Orit says: }\marrow\textsf{#1})}

\newtheorem{theorem}{Theorem}[section]
\newtheorem{lemma}[theorem]{Lemma}
\newtheorem{proposition}[theorem]{Proposition}
\newtheorem{corollary}[theorem]{Corollary}

\theoremstyle{definition}
\newtheorem{definition}[theorem]{Definition}

\begin{document}

\title{The Elekes-Szab\'o Theorem in four dimensions\let\thefootnote\relax\footnotetext{
Work on this paper by Orit E. Raz and Micha Sharir was supported by Grant 892/13 from the Israel Science Foundation, and
by the Israeli Centers of Research Excellence (I-CORE) program (Center No.~4/11).
Work by Orit E. Raz was also supported by a Shulamit Aloni Fellowship from the Israeli Ministry of Science. 
Work by Micha Sharir was also supported by Grant 2012/229 from the U.S.--Israel Binational Science Foundation, 
and by the Hermann Minkowski-MINERVA Center for Geometry at Tel Aviv University.
Work on this paper by Frank de Zeeuw was partially supported by Swiss National Science Foundation Grants 200020-165977 and 200021-162884.}}

\author{
Orit E. Raz\thanks{%
School of Computer Science, Tel Aviv University,
Tel Aviv
, Israel.
{\sl oritraz@post.tau.ac.il} }
\and
Micha Sharir\thanks{%
School of Computer Science, Tel Aviv University,
Tel Aviv
, Israel.
{\sl michas@post.tau.ac.il} }
\and
Frank de Zeeuw\thanks{%
Mathematics Department, EPFL, Lausanne, Switzerland.
{\sl fdezeeuw@gmail.com}} }

\maketitle

\vspace{-20pt}
\begin{abstract}
Let $F\in\C[x,y,s,t]$ be an irreducible constant-degree polynomial,
and let $A,B,C,D\subset\C$ be finite sets of size $n$.
We show that $F$ vanishes on at most $O(n^{8/3})$ points of the 
Cartesian product $A\times B\times C\times D$,
unless $F$ has a special group-related form. 
A similar statement holds for $A,B,C,D$ of unequal sizes,
with a suitably modified bound on the number
of zeros. 
This is a four-dimensional extension of our recent improved 
analysis of the original Elekes-Szab\'o theorem in three dimensions.
We give three applications: an expansion bound for three-variable 
real polynomials that do not have a special form, 
a bound on the number of coplanar quadruples on a 
space curve that is neither planar nor quartic, 
and a bound on the number of four-point circles on 
a plane curve that has degree at least five.
\end{abstract}

\section{Introduction}

Elekes and R\'onyai \cite{ER00} and Elekes and Szab\'o \cite{ES12} 
pioneered the study of algebraic structures behind problems from combinatorial geometry. 
The main result of \cite{ES12} was quantitatively improved by the authors in \cite{RSZ15} to the following statement;
we state it here in a somewhat rough form, and refer to \cite{RSZ15} for a full and precise statement.
Given an irreducible polynomial $F\in \C[x,y,z]$ and finite sets $A,B,C\subset \C$ of size $n$, 
we have the bound (writing $Z(F)$ for the zero set of $F$)
\begin{align}\label{eq:ES3D}
|Z(F)\cap (A\times B\times C)| = O(n^{11/6}),
\end{align}
with the constant of proportionality depending on the degree of $F$, 
unless $F$ has the special property that, 
in a certain local sense, the equation $F(x,y,z)=0$ is equivalent to $\varphi_1(x)+\varphi_2(y)+\varphi_3(z)=0$, 
for some locally defined invertible analytic functions $\varphi_1,\varphi_2,\varphi_3$.
When $F$ does not have this special property,
the bound \eqref{eq:ES3D} improves  on the simple bound $O(n^2)$ 
that follows from the Schwartz-Zippel lemma \cite{Sch80,Zi89} 
(see also \cite[Lemma A.4]{RSZ15}).
A bound similar to \eqref{eq:ES3D} holds in the ``unbalanced case'', 
when the sets $A,B,C$ are allowed to have different sizes.
We refer to \cite{ES12, RSS14, RSZ15, Z16} for further background, 
including many examples of problems from combinatorial 
geometry that fit into to this algebraic framework.

In the current paper we prove a natural four-dimensional variant of the result of \cite{ES12,RSZ15}.
The only previous work that considered such a variant is Schwartz et al.~\cite{SSZ13},
where a weaker bound was proved in the special case $F = f(x,y,s)-t$, for a real polynomial $f$ that is not of the form $g(h(x)+k(y)+l(z))$ or $g(h(x)\cdot k(y)\cdot l(z))$ for real polynomials $g,h,k,l$.
Here we prove a quantitatively better bound for arbitrary complex polynomials $F(x,y,s,t)$, with an exceptional form analogous to that in \cite{ES12,RSZ15}.
In Corollary \ref{cor:ER4D}, we deduce an improved version of the result of \cite{SSZ13}.

As in \cite{RSZ15}, we state our main theorem in an ``unbalanced'' form, where the finite sets are permitted to have different sizes.

\begin{theorem}\label{thm:main}
Let $F\in \C[x,y,s,t]$ be irreducible of degree $\delta$, with none of $F_x,F_y,F_s,F_t$ identically zero.
Then one of the following two statements holds.\\
(i) For any finite sets $A,B,C,D\subset \C$ we have 
\begin{align*}
|Z(F)\cap (A\times B\times C\times D)| = O\Big(|A|^{2/3}|B|^{2/3}|C|^{2/3}|D|^{2/3}+|A||B|+ |A||C|\\+|A||D|+|B||C|+|B||D|+|C||D|\Big),
\end{align*}
where the constant of proportionality depends (polynomially) on $\delta$.\\
(ii) There exists a two-dimensional subvariety
$Z_0\subset Z(F)$,
such that for all $v\in Z(F)\minus Z_0$,
there exist open sets $D_i\subset \C$ and one-to-one analytic functions $\varphi_i: D_i\to \C$, with analytic inverses, for $i=1,2,3,4$,
such that $v\in D_1\times D_2\times D_3\times D_4$ and for all $(x,y,s,t)\in D_1\times D_2\times D_3\times D_4$ we have
\[(x,y,s,t)\in Z(F)~~~\text{if and only if}~~~\varphi_1(x)+\varphi_2(y)+\varphi_3(s) +\varphi_4(t)= 0.\]
\end{theorem}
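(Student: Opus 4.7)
The natural first move is to trade the fourth dimension for richness of the curve family. Let $P = A \times B \subset \C^2$, and for each $(c,d) \in C \times D$ let $\gamma_{c,d} = \{(x,y) \in \C^2 : F(x,y,c,d) = 0\}$, an algebraic curve of degree at most $\delta$. Then the desired count equals the incidence count $I(P, \Gamma)$ between $P$ and the multiset $\Gamma = \{\gamma_{c,d}\}_{(c,d) \in C \times D}$. The complex version of the Pach--Sharir incidence bound for algebraic curves in $\C^2$ (as used in \cite{RSZ15}) yields
\[
I(P,\Gamma) = O\Big(|P|^{2/3}|\Gamma|^{2/3} + |P| + |\Gamma|\Big) = O\Big((|A||B||C||D|)^{2/3} + |A||B| + |C||D|\Big),
\]
provided that any two distinct curves of $\Gamma$ share only $O(1)$ points and any two distinct points of $P$ lie on only $O(1)$ common curves of $\Gamma$. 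Note that this is precisely the main term, together with two of the six cross-terms, of the theorem.

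\textbf{Handling degenerate pairs.} The two incidence hypotheses fail exactly under algebraic reducibility conditions: two curves $\gamma_{c,d}, \gamma_{c',d'}$ share a component iff $F(\cdot,\cdot,c,d)$ and $F(\cdot,\cdot,c',d')$ have a non-trivial common factor in $\C[x,y]$, and two points $(a,b), (a',b')$ lie on infinitely many common $\gamma_{c,d}$ iff $F(a,b,\cdot,\cdot)$ and $F(a',b',\cdot,\cdot)$ have a non-trivial common factor in $\C[s,t]$. Each degeneracy locus is a proper subvariety of the parameter space, and a careful fibration argument absorbs the corresponding contributions to the incidence count into cross-terms of the form $|A||C|$, $|A||D|$, $|B||C|$, $|B||D|$. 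Running the analysis symmetrically (also slicing by $(a,b)$-curves in the $(s,t)$-plane) produces all six cross-terms and accounts for the symmetric form of the bound.

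\textbf{Structural conclusion and main obstacle.} If the combinatorial bound fails, then the degeneracies must be abnormally abundant, which via the Elekes--Szab\'o paradigm of \cite{ES12, RSZ15} forces the existence of an analytic abelian group law on a dense open subset of $Z(F)$, exhibiting $Z(F)$ locally as a coset of a subgroup in a four-dimensional abelian group whose one-dimensional factors are complex algebraic groups. Rectifying each factor by analytic substitutions $\varphi_i$ (additive for the $(\C,+)$ case, logarithmic for $(\C^*,\cdot)$, Weierstrass-type for an elliptic factor) converts the group law into the linear equation $\varphi_1(x)+\varphi_2(y)+\varphi_3(s)+\varphi_4(t) = 0$, which is conclusion (ii). The main obstacle is extracting enough algebraic/analytic structure from the combinatorial degeneracies: in 3D one extracts a one-parameter family of symmetries rather directly, but in 4D one must produce a richer family of commuting deformations while simultaneously excluding ``partial'' degeneracies that involve only three of the four variables (which would correspond to a relation of the form $\psi_1(x)+\psi_2(y)+\psi_3(s) = 0$ and would contradict $F_t \not\equiv 0$). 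A secondary subtlety is that the exceptional set $Z_0 \subset Z(F)$ is permitted to be two-dimensional, so the structural conclusion need only be proved up to a thin subset of the three-dimensional variety $Z(F)$.
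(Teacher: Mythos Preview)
Your setup---curves $\gamma_{c,d}$, incidence bound from \cite{RSZ15}/\cite{SZ15}---matches the paper exactly. But from ``Handling degenerate pairs'' onward the proposal is a sketch rather than a proof, and it diverges from the paper in a way that leaves a genuine gap.

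\textbf{On the cross-terms.} You assert that the degenerate loci contribute only $|A||C|$-type terms via a ``careful fibration argument,'' but you give no such argument, and in fact this is false as stated: if $F$ has the special form in (ii), degenerate pairs are generic and contribute $\Theta(n^3)$. The paper's mechanism is different. It proves a dichotomy (Proposition~\ref{prop:keyprop}): either $F$ satisfies (ii), or there exist low-degree curves $\mathcal{S}',\mathcal{T}'\subset\C^2$ such that \emph{every} $(c,d)\notin\mathcal{T}'$ gives a curve $\gamma_{c,d}$ with no popular component, and dually for $(a,b)\notin\mathcal{S}'$. The six cross-terms then come from Schwartz--Zippel bounds on $|\mathcal{S}'\cap(A\times B)|$ and $|\mathcal{T}'\cap(C\times D)|$, not from a symmetric re-slicing.

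\textbf{On the structural conclusion.} This is where the real gap lies. You invoke an ``Elekes--Szab\'o paradigm'' to produce an abelian group law, but give no mechanism for how abundant degeneracies yield one in four variables. The paper does \emph{not} go through group theory. Instead it builds the variety
\[
\widetilde{W}\subset\{(x,y,x',y',s,t):F(x,y,s,t)=F(x',y',s,t)=G=0\}\subset\C^6,
\]
where $G=F_s(x,y,s,t)F_t(x',y',s,t)-F_s(x',y',s,t)F_t(x,y,s,t)$, and shows that popular curves force $\gamma\times\gamma\times\gamma^*\subset\widetilde{W}$. The dichotomy is then: if $\dim\widetilde{W}=3$ the popular curves are few; if $\dim\widetilde{W}=4$ one derives that $y_s/y_t$ depends only on $(s,t)$.

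The step you are missing entirely is that this last conclusion, by itself, is \emph{not enough}. The paper repeats the construction for the two other splittings $\{(s,y),(x,t)\}$ and $\{(t,y),(s,x)\}$, and only when $\dim\widetilde{W}_\sigma=4$ for \emph{all three} permutations $\sigma$ does one obtain the system
\[
\frac{y_s}{y_t}=h_1(s,t),\qquad \frac{y_s}{y_x}=h_2(s,x),\qquad \frac{y_t}{y_x}=h_3(t,x),
\]
whose compatibility $h_2=h_1h_3$ lets one separate variables and integrate to $y=h(p(x)+q(s)+r(t))$. Your proposal has no analogue of this three-permutation argument, and your remark about excluding ``partial degeneracies involving only three variables'' is precisely the difficulty that this device resolves: a single splitting cannot distinguish (ii) from such partial relations.
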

In the ``balanced'' version, where $|A|=|B|=|C|=|D|=n$, the bound in $(i)$ becomes $O(n^{8/3})$.
This improves (for non-exceptional $F$) on the simple bound $O(n^3)$ that follows from the Schwartz-Zippel lemma.

The theorem of course also applies to real polynomials $F\in \R[x,y,s,t]$ and real sets $A,B,C,D\subset \R$.
However, the functions $\varphi_i$ in statement $(ii)$ would still be complex, 
and a priori there need not be real functions for which the same statement holds.
The same situation arose in \cite{RSZ15}, 
and there we proved a fully real analogue of the main theorem.
Using the same reasoning, we can obtain a real analogue of 
Theorem \ref{thm:main}, with real-analytic functions $\varphi_i$.
We omit the proof, 
which would be almost identical to that in \cite[Section 5]{RSZ15} 
(one only needs to change the number of variables).

%

\newpage
\paragraph{Discussion.} The proof of Theorem \ref{thm:main} proceeds along the same lines as the proof in \cite{RSZ15}:
 The bound in $(i)$ can be deduced from a planar incidence bound for points and algebraic curves, 
 unless the obtained points and curves are degenerate in a certain sense;
 this degeneracy implies that $F$ satisfies a certain differential equation,
 which then leads to the special property in $(ii)$.
 In fact, the proof of Theorem \ref{thm:main} is in some ways \emph{simpler} than that in \cite{RSZ15}.
 In particular, the curves can be defined in a more straightforward way (each curve is the zero set of $F(x,y,s_0,t_0)$ for fixed $s_0,t_0$)
 than in \cite{RSZ15} (where the curves were defined using quantifier elimination).
On the other hand, the argument leading from the differential equation for $F$ to the property $(ii)$ is more complicated in the current case than in \cite{RSZ15}.

One might view Theorem~\ref{thm:main} as 
providing an alternative context within which certain planar incidence problems can be studied. 
Specifically, in several algebraic applications of such incidence bounds
(as is also the case in our reduction),
the families of curves that arise are
defined by a single polynomial $F(x,y,s,t)$, 
with each curve in the family obtained from $F$ by fixing values of $s$ and $t$, 
using points $(s,t)$ from a given point set.
A subset of these applications has the further feature that the point set is 
a Cartesian product $A\times B$ of two finite sets $A,B$ in $\R$ or $\C$, 
and the curve set is parameterized by a Cartesian 
product $S\times T$ with $S,T$ finite sets in $\R$ or $\C$;
see Solymosi and De Zeeuw~\cite[Section 7]{SZ15} 
for several examples of such applications.
The incidence bound proved in \cite{SZ15}, of which 
Proposition \ref{prop:incbound} below is a corollary, 
is tailored to such applications,
but it comes with a combinatorial condition on the 
incidence structure of the points and curves 
(similar to the condition in the original point-curve incidence bound of 
Pach and Sharir \cite{PS98}).
Theorem \ref{thm:main} replaces this combinatorial 
condition with an algebraic condition on $F$.
An incidence bound with a  similar (but different) 
algebraic condition was obtained in \cite{NPVZ15}, 
for point sets and curve sets that do not have this 
Cartesian product structure. 

\paragraph{Applications.} 
One application of Theorem \ref{thm:main} is a higher-dimensional variant 
of the original result of Elekes and R\'onyai \cite{ER00} on expanding polynomials.
The result of \cite{ER00} (following the improvement in \cite{RSS14})
states that for $f\in \R[x,y]$ and finite sets $A,B\subset \R$ of size $n$,
we have $|f(A\times B)| = \Omega(n^{4/3})$ (with the 
constant of proportionality depending on the degree of $f$), 
unless $f$ has one of the special forms $g(h(x)+k(y))$ or 
$g(h(x)\cdot k(y))$, for $g,h,k\in \R[t]$ (in this
latter case, $|f(A\times B)|$ can be $O(n)$).
Schwartz, Solymosi, and De Zeeuw \cite{SSZ13} proved a three-variable version, 
which states that for $f\in \R[x,y,z]$ and finite sets $A,B,C\subset \R$ of size $n$, 
we have $|f(A\times B\times C)| = \omega(n)$, unless $f=g(h(x)+k(y)+l(z))$ or 
$g(h(x)\cdot k(y)\cdot l(z))$, for $g,h,k,l\in \R[t]$.
Using Theorem \ref{thm:main}, 
we can deduce a quantitative improvement of this bound.

\begin{corollary}\label{cor:ER4D}
Let $f\in \R[x,y,z]$ and let $A,B,C\subset \R$ be finite sets of size $n$.
Either we have
\[|f(A\times B\times C)| = \Omega(n^{3/2}), \]
with the constant of proportionality depending on the degree of $f$,
or $f$ has one of the forms 
\[f=g(h(x)+k(y)+l(z))~~~\text{or}~~~g(h(x)\cdot k(y)\cdot l(z)),\]
for some polynomials $g,h,k,l\in \R[t]$.
\end{corollary}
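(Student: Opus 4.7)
The strategy is to apply Theorem \ref{thm:main} to the polynomial
\[F(x,y,s,t) := f(x,y,s) - t \in \R[x,y,s,t],\]
where we have renamed the third variable of $f$ to $s$ so as to match the notation of the theorem. We may assume $f$ genuinely depends on all three variables, since otherwise $f$ falls under the 2D Elekes--R\'onyai theorem \cite{ER00,RSS14}, and any 2D special form is also a 3D special form with one ingredient constant (or equal to $1$ in the multiplicative case). Then $F_x=f_x,F_y=f_y,F_s=f_s$ are non-vanishing, $F_t=-1$ is non-vanishing, and $F$ is irreducible (being linear in $t$), so the hypotheses of Theorem \ref{thm:main} are satisfied. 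Setting $D:=f(A\times B\times C)$, each triple $(a,b,c)\in A\times B\times C$ produces a point $(a,b,c,f(a,b,c))\in Z(F)\cap (A\times B\times C\times D)$, hence
\[|Z(F)\cap(A\times B\times C\times D)| \ge n^3.\]

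If alternative (i) of Theorem \ref{thm:main} holds, then with $|A|=|B|=|C|=n$ the unbalanced bound becomes
\[n^3 \le O\!\left(n^2|D|^{2/3} + n|D| + n^2\right).\]
For $n$ large the additive $n^2$ term is dominated, $n|D|\gtrsim n^3$ would imply the stronger bound $|D|=\Omega(n^2)$, and $n^2|D|^{2/3}\gtrsim n^3$ yields the claimed expansion bound $|D|=\Omega(n^{3/2})$.

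Suppose instead that alternative (ii) holds. Then on a dense open subset of $Z(F)$ there exist analytic invertible functions $\varphi_1,\varphi_2,\varphi_3,\varphi_4$ for which $F=0$ is locally equivalent to $\varphi_1(x)+\varphi_2(y)+\varphi_3(s)+\varphi_4(t)=0$. Substituting $t=f(x,y,s)$ yields
\[f(x,y,s) \;=\; \Phi\bigl(\alpha(x)+\beta(y)+\gamma(s)\bigr)\]
locally, with $\Phi:=\varphi_4^{-1}\!\circ(-\,\cdot\,)$, $\alpha=\varphi_1$, $\beta=\varphi_2$, $\gamma=\varphi_3$. The main obstacle, and the real content of the corollary, is then to promote this local analytic decomposition of the polynomial $f$ to a global polynomial decomposition of one of the two forms listed in the statement. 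I would proceed by differentiating to obtain the three separation identities
\[f_x=\Phi'\cdot\alpha'(x),\qquad f_y=\Phi'\cdot\beta'(y),\qquad f_s=\Phi'\cdot\gamma'(s),\]
with $\Phi'$ evaluated at the inner sum. This forces the ratios $f_x/f_y$, $f_x/f_s$, $f_y/f_s$, which are rational in three variables, to in fact be rational functions separating in the two relevant variables. Fixing a generic $s=s_0$ and applying the structural 2D version of Elekes--R\'onyai (as in \cite{ER00,RSS14}) to the bivariate slice $f(x,y,s_0)$ shows that $\Phi$ is, up to an affine change of its argument, either a polynomial or an exponential; this dichotomy, being determined by the common outer function $\varphi_4$, must be the same for every generic $s_0$. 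I would then glue the results across the three pairs of variables to realize $f$ either as $g(h(x)+k(y)+l(s))$ (the ``$\Phi$ polynomial'' branch) or as $g(h(x)\cdot k(y)\cdot l(s))$ (the ``$\Phi$ exponential'' branch) for polynomials $g,h,k,l\in\R[t]$. Carrying out this gluing, and in particular verifying that the resulting one-variable pieces are polynomial rather than merely rational or analytic, is the most delicate step and where I expect the bulk of the effort to lie.
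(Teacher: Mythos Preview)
Your setup and the handling of alternative (i) match the paper exactly. The difference is entirely in how you treat alternative (ii).

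You propose to upgrade the local analytic identity $f(x,y,s)=\Phi(\alpha(x)+\beta(y)+\gamma(s))$ to a global polynomial decomposition by differentiating, slicing, applying the two-variable structural theorem to each slice, and then gluing. As you yourself anticipate, this gluing is delicate, and you have not carried it out; in particular you would need to show that the outer function is polynomial (or exponential) \emph{globally} and that the inner pieces line up compatibly across all slices, not just generically. This is doable in principle but is nontrivial work.

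The paper sidesteps all of this with a short trick. Once you have the (real-analytic) local identity from alternative (ii), you do not try to deduce the polynomial form directly. Instead you observe that for any $n$ you can choose $A\subset D_1$, $B\subset D_2$, $C\subset D_3$ of size $n$ so that $\varphi_1(A)$, $\varphi_2(B)$, $\varphi_3(C)$ are arithmetic progressions with a common difference; then $\varphi_1(A)+\varphi_2(B)+\varphi_3(C)$ has only $O(n)$ elements, and since $\varphi_4$ is invertible this forces $|f(A\times B\times C)|=O(n)$. Now you invoke the already-established qualitative result \cite[Theorem~1.4]{SSZ13}, which says that the existence of such sets (i.e., $|f(A\times B\times C)|=o(n^{3/2})$, or even just $O(n)$) implies $f=g(h(x)+k(y)+l(z))$ or $f=g(h(x)\cdot k(y)\cdot l(z))$ with polynomial $g,h,k,l$. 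In other words, the hard ``analytic-to-polynomial'' promotion you are bracing for has already been done in \cite{SSZ13}, and the paper simply cites it. This makes the proof of the corollary a few lines long. One small point: to build these real arithmetic progressions you need the real-analytic version of alternative (ii), which the paper notes follows exactly as in \cite[Section~5]{RSZ15}; your write-up should make that explicit.
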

\begin{proof}
Set $F = t - f(x,y,s)$ and $D = f(A\times B\times C)$, 
and apply Theorem \ref{thm:main} 
(more precisely, we apply the real version, which holds by the remark following the theorem).
So $F$ satisfies one of the properties $(i)$ or (the real version of) $(ii)$.
Suppose first that $(i)$ holds. 
Then we have 
\[n^3 = |Z(F)\cap (A\times B\times C\times D)| =  O(n^2\cdot |D|^{2/3}+n|D|),\] 
which leads to $|D|= \Omega(n^{3/2})$.

Suppose $f$ satisfies the real version of statement $(ii)$, i.e., 
we have $D_i\subset \R$ and real-analytic maps $\varphi_i:D_i\to \R$ for $i=1,2,3,4$,
so that, locally, we have $F(x,y,s,t)=0$ if and only if 
\[\varphi_1(x)+\varphi_2(y)+\varphi_3(s) +\varphi_4(t)= 0.\]
Then, for any $n$, 
we can choose finite sets $A,B,C\subset \R$ of size $n$, 
so that there are only $O(n)$ elements $t\in \R$ for 
which there exists $(a,b,c)\in A\times B\times C$ such that
$\varphi_1(a)+\varphi_2(b)+\varphi_3(c) +\varphi_4(t)= 0$. 
More concretely, 
we choose $A,B,C$ so that each of the sets $\varphi_1(A)$,
$\varphi_2(B)$, $\varphi_3(C)$ forms an arithmetic progression with the same common difference. 
It follows that $|f(A\times B\times C)|=O(n)$.
By \cite[Theorem 1.4]{SSZ13},
this implies that $f$ is of one of the forms
$f=g(h(x)+k(y)+l(y))$ or $g(h(x)\cdot k(y)\cdot l(y))$, for some polynomials $g,h,k,l\in \R[t]$.
\end{proof}

For the specific polynomial $f(x,y,z)=x+yz$, the bound in Corollary \ref{cor:ER4D} is a well-known consequence of the Szemer\'edi-Trotter theorem (usually stated as $|A+BC|=\Omega(n^{3/2})$, for $A,B,C\subset \R$ of size $n$).
As far as we are aware, the earliest explicit mention of this bound is in Tao and Vu \cite[Exercise 8.3.3]{TV06}.
K\'arolyi \cite{K09} observed the same bound for $f(x,y,z) = xy+yz+zx$.
We do not know of any three-variable polynomial $f$ for which a better bound than $|f(A\times B\times C)|=\Omega(n^{3/2})$ is known, 
although Roche-Newton \cite{RN16} (improving on a similar result in \cite{MRNS15}) proved that in the case $A=B=C$, the polynomial $f(x,y,z) = x(y+z)$ satisfies $|f(A\times A\times A)|=\Omega(|A|^{3/2+5/242})$.

As a second application, 
we prove a three-dimensional analogue of a 
geometric application considered in \cite{RSZ15},
where we have shown that $n$ points on an algebraic curve in 
$\C^2$ span at most $O(n^{11/6})$ proper collinear triples (with the constant of 
proportionality depending on the degree of the curve), 
unless the curve contains a line or a cubic.
Here we prove the following analogous bound on 
coplanar quadruples on a curve in $\C^3$.
We say that a quadruple of points in $\C^3$ is 
\emph{proper} if the four points are distinct.

\begin{theorem}\label{thm:coplanarintro}
Let $C$ be an algebraic curve of degree $d$ in $\C^3$, and 
let $S\subset C$ be a finite set of size $n$. 
Then the number of proper coplanar quadruples from $S$ 
is
$O(n^{8/3})$, 
unless $C$ contains either a curve that is contained in a plane, or a curve of degree four.
\end{theorem}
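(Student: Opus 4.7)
\noindent\emph{Proof plan.} We plan to encode coplanarity of four points on $C$ by the vanishing of a single polynomial $F\in\C[x,y,s,t]$ in four variables, and then invoke Theorem~\ref{thm:main} with all four finite sets taken to be the image of $S$ under a generic linear projection $\C^3\to\C$. We first reduce to the case where $C$ is irreducible and not contained in a plane: coplanar quadruples with points in distinct irreducible components can be controlled by B\'ezout-type estimates, while a planar component places us already in the exceptional class of the theorem. We also dispose of $\deg C\le 3$, since a non-planar $C$ of degree at most three is projectively a twisted cubic, which admits no proper coplanar quadruples. So we may assume $d:=\deg C\ge 4$. After a generic linear change of coordinates on $\C^3$, the first-coordinate projection $\pi\colon(x,y,z)\mapsto x$ is injective on $S$ and restricts to a finite map $\pi|_C\colon C\to\C$ of degree $d$; write $A:=\pi(S)$, so $|A|=n$.

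Let $X\subset C^4$ be the three-dimensional subvariety of coplanar quadruples, cut out in $C^4$ by the $4\times 4$ coplanarity determinant, and let $Y\subset\C^4$ be the Zariski closure of the image of $X$ under the componentwise first-coordinate projection $C^4\to\C^4$. Since the fibers of this projection restricted to $X$ are finite, $Y$ is a hypersurface, defined by a polynomial $F$ that is symmetric in its four arguments by the $S_4$-invariance of coplanarity. Every proper coplanar ordered quadruple in $S^4$ projects to a distinct point of $Z(F)\cap A^4$. We factor $F$ into irreducibles and apply Theorem~\ref{thm:main} to each irreducible factor $F_0$ that depends on all four variables; factors of the form $x_i-x_j$ are discarded, as they cannot vanish on quadruples in $A^4$ with pairwise distinct entries. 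If $F_0$ satisfies conclusion~(i), we obtain $|Z(F_0)\cap A^4|=O(n^{8/3}+n^2)$, and summing over the (boundedly many) factors of $F$ yields the desired bound.

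Suppose instead that some factor $F_0$ satisfies conclusion~(ii), so that on an open domain $Z(F_0)$ is described by $\varphi_1(x)+\varphi_2(y)+\varphi_3(s)+\varphi_4(t)=0$ with one-to-one analytic functions $\varphi_i$. Fix a generic coplanar quadruple $(p_1,p_2,p_3,p_4)\in C^4$ whose first-coordinates $(u_1,u_2,u_3,u_4)$ lie in this open domain, and let $\Pi$ denote the plane spanned by $p_1,p_2,p_3,p_4$. By B\'ezout, $\Pi\cap C$ consists of exactly $d$ points, namely $p_1,\ldots,p_4$ together with $d-4$ additional points $p_5,\ldots,p_d$, each of which is coplanar with $(p_1,p_2,p_3)$. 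Applying the additive identity to $(p_1,p_2,p_3,p_4)$ and to $(p_1,p_2,p_3,p_j)$ for $j\ge 5$ yields $\varphi_4(u_j)=\varphi_4(u_4)$, so local injectivity of $\varphi_4$ forces $u_j=u_4$. But a generic plane $\Pi$ meets the vertical line $\{x=u_4\}$ in a single point, namely $p_4$, so no further $p_j$ can lie in $\Pi$ unless $d=4$. Hence $d=4$, placing $C$ in the stated exceptional class of degree-four curves.

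The principal obstacle is the geometric rigidification carried out in the last step: one must choose $(p_1,\ldots,p_4)$ so that the extra intersection points $p_5,\ldots,p_d$ lie in the open domain where the local description of~(ii) applies, thereby allowing the local injectivity of $\varphi_4$ to be invoked; otherwise one must first propagate the analytic identity to a global algebraic one. Secondary technical issues include verifying the symmetry and irreducibility properties of $F$, disposing of irreducible factors of $F$ on which Theorem~\ref{thm:main} does not directly apply, and accounting for cross-component coplanar quadruples in the initial reduction to irreducible $C$.
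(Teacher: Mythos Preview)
Your reduction to Theorem~\ref{thm:main} via the coplanarity determinant and componentwise first-coordinate projection to $\C^4$ matches the paper's setup. The divergence is entirely in how you handle conclusion~(ii), and the obstacle you yourself flag there is fatal to the argument as it stands. The additive description $\varphi_1(x)+\cdots+\varphi_4(t)=0$ of $Z(F_0)$ is valid only on a product of small disks $D_1\times D_2\times D_3\times D_4$; in particular $\varphi_4$ is defined only on a neighborhood $D_4$ of $u_4$. The extra intersection points $p_5,\ldots,p_d$ of $\Pi$ with $C$ are generically nowhere near $p_4$ on the curve, so their first coordinates $u_j$ lie outside $D_4$ and $\varphi_4(u_j)$ is simply undefined. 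There is a second, independent problem: even if $u_j\in D_4$, coplanarity of $(p_1,p_2,p_3,p_j)$ only gives $(u_1,u_2,u_3,u_j)\in Z(F)$, not membership in the particular irreducible factor $Z(F_0)$ to which the local chart belongs; the different branches $p_j$ of the plane-section correspondence typically parametrize different components of $Y$. Your fallback of ``propagating the analytic identity to a global algebraic one'' is not a known consequence of condition~(ii) and would itself require substantial new input. (A smaller issue: $\{x=u_4\}$ is a plane in $\C^3$, so $\Pi\cap\{x=u_4\}$ is a line rather than a single point, and even granting $u_j=u_4$ this does not by itself rule out further $p_j$.)

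The paper avoids the locality problem by never leaving the neighborhoods. From the local additive description it manufactures, for every $n$, points $q_1^j\in U_1$, $q_2^k\in U_2$, $q_3^\ell\in U_3$ whose $\varphi_i$-images form arithmetic progressions with a common step, so that $\Theta(n^2)$ triples $(q_1^j,q_2^k,q_3^{j+k})$ span a plane through the fixed point $p_4$. Centrally projecting from $p_4$ onto a generic plane then yields $\Theta(n^2)$ proper collinear triples on $\rho(C_1)\times\rho(C_2)\times\rho(C_3)$, and the three-variable result (Theorem~\ref{thm:threecurves}) forces $\rho(C_1)\cup\rho(C_2)\cup\rho(C_3)$ to be a line or a cubic. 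The line case makes a component of $C$ planar; in the cubic case, the standard fact that projecting a curve from one of its own regular points drops the degree by exactly one shows the relevant component of $C$ has degree four.
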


Certain space curves of degree four have a group 
structure that allows us to construct subsets of $n$ points with 
$\Theta(n^3)$ coplanar quadruples (see Section \ref{sec:app}).
Space quartics also turn up in Ball \cite{B16} as curves with few {\it ordinary planes} (planes containing three points of a given point set in space).

Finally, we use Theorem \ref{thm:coplanarintro} to prove a 
bound on the number of four-point circles (circles incident to at least 
four points) spanned by a finite set on a plane algebraic curve, 
unless that curve is contained in a quartic curve (see Corollary 
\ref{cor:circles}).
This is related to the orchard problem for circles (see Lin et al. \cite{LMNSSZ16}).

The organization of our paper is as follows: In Section \ref{sec:proofoftheorem} we give the proof of Theorem \ref{thm:main}, 
except for the proof of the crucial Proposition \ref{prop:keyprop}, 
which is given in Section \ref{sec:proofofkey}.
In Section~\ref{sec:app} we prove Theorem \ref{thm:coplanarintro}
and its implication for four-point circles.

\section{Proof of Theorem \ref{thm:main}}\label{sec:proofoftheorem}

\subsection{Setup}

Let $F\in\C[x,y,s,t]$ be as in the statement of the theorem, and let $A,B,C,D\subset\C$ 
be four arbitrary finite sets.
The quantity we wish to bound is
$$
M:=|Z(F)\cap (A\times B\times C\times D)|.
$$
The strategy of the proof is to transform the problem of bounding $M$ into an incidence problem for points and curves in $\C^2$. 
The latter problem can then be tackled using the machinery that we have established in our recent work
\cite[Theorem 4.3]{RSZ15}, {\em provided} that 
the resulting curves have well-behaved intersections, in a sense that we will make precise below.
A major component of the proof is to show that 
if the points and curves that we are about to define 
do not have such well-behaved intersections, 
then $Z(F)$ must have the special form described in property $(ii)$ of the theorem.

\subsection{Curves}

\paragraph{Primal curves.}
For every point $(c,d)\in \C^2$, we define 
\begin{equation}\label{eq:gamma}
\gamma_{c,d}:= \{(x,y)\in \C^2\mid F(x,y,c,d) = 0\}.
\end{equation}
It is not always the case that $\gamma_{c,d}$ is a curve; it can turn out to be two-dimensional.
The following lemma quantifies the exceptional case, allowing us to exclude it in what follows.

\begin{lemma}\label{lem:quantelim}
Let $F\in \C[x,y,s,t]$ be an irreducible polynomial of degree $\delta$ such that none of $F_x, F_y,F_s,F_t$ is identically zero. 
Then there is a finite set $\mathcal{T}\subset \C^2$ with $|\mathcal{T}|\le \delta^2$ such that, for each $(c,d)\not\in \mathcal{T}$,
the set  $\gamma_{c,d}$, as defined in \eqref{eq:gamma}, is either an algebraic curve of degree at 
most $\delta$ or the empty set; for each $(c,d)\in \mathcal{T}$, the set $\gamma_{c,d}$ equals $\C^2$.
\end{lemma}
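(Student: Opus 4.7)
The plan is to expand $F$ as a polynomial in $(x,y)$ over the coefficient ring $\C[s,t]$,
\[
F(x,y,s,t) = \sum_{i+j \le \delta} a_{ij}(s,t)\, x^i y^j,
\]
where each $a_{ij}\in\C[s,t]$ has degree at most $\delta$. The fiber $\gamma_{c,d}$ coincides with $\C^2$ exactly when $F(x,y,c,d)\equiv 0$, which happens precisely when $a_{ij}(c,d)=0$ for every $(i,j)$; for any other $(c,d)$ the polynomial $F(x,y,c,d)\in\C[x,y]$ is nonzero of $(x,y)$-degree at most $\delta$, and its zero locus in $\C^2$ is either empty (when $F(x,y,c,d)$ is a nonzero constant) or an algebraic curve of degree at most $\delta$. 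Setting
\[
\mathcal{T} := \bigcap_{(i,j)} V(a_{ij}) \subset \C^2,
\]
the entire task reduces to showing $|\mathcal{T}|\le \delta^2$.

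Next I would argue that the $a_{ij}$'s have no nonconstant common factor in $\C[s,t]$. If some irreducible $p(s,t)$ divided every $a_{ij}$, then $p$ would divide $F$ inside $\C[s,t][x,y]$, and irreducibility of $F$ would force $F$ to be a scalar multiple of $p(s,t)$, contradicting $F_x\not\equiv 0$. Hence $\gcd(a_{ij})=1$, which already tells us that $\mathcal{T}$ is $0$-dimensional, in particular finite.

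To extract the quantitative bound $\delta^2$ I would pass to two carefully chosen $\C$-linear combinations $f_1,f_2$ of the $a_{ij}$'s. Any such combination has degree at most $\delta$ and vanishes on $\mathcal{T}$, so $\mathcal{T}\subseteq V(f_1)\cap V(f_2)$, and Bezout's theorem delivers $|V(f_1)\cap V(f_2)|\le \deg f_1\cdot\deg f_2\le \delta^2$ as soon as $\gcd(f_1,f_2)=1$. To secure this last property, let $W\subset\C[s,t]$ be the $\C$-span of the $a_{ij}$. Pick any nonzero $f_1\in W$ and list its finitely many irreducible factors $p_1,\dots,p_k$. For each $p_\ell$, the subset of $g\in W$ with $p_\ell\mid g$ is a proper linear subspace of $W$ (because some $a_{ij}$ is not divisible by $p_\ell$, the whole family being coprime). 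A generic $f_2\in W$ avoids this finite union of proper subspaces, which yields $\gcd(f_1,f_2)=1$.

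The main obstacle I anticipate is precisely this coprimality step. A naive attempt would apply Bezout directly to two specific coefficients $a_{ij}$, $a_{i'j'}$; but even when the full family has gcd $1$, individual pairs can still share a factor (as in the toy example $pq$, $pr$, $qr$), so the clean bound $\delta^2$ does not drop out for free. The linear-combination trick bypasses this by replacing the coefficients with two sufficiently generic elements of their span, for which the gcd provably collapses to~$1$ and Bezout becomes applicable.
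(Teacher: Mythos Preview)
Your proof is correct and follows essentially the same approach as the paper: both define $\mathcal{T}$ as the common zero locus of the $(x,y)$-coefficients of $F$, argue that these coefficients have no common factor by irreducibility and $F_x\not\equiv 0$, and then bound $|\mathcal{T}|$ via B\'ezout. The only difference is that the paper invokes a ready-made ``B\'ezout for many curves'' lemma (Lemma~3.10 of \cite{RSZ15}) as a black box, whereas you unpack that step with the generic-linear-combination trick to reduce to classical B\'ezout for two coprime curves.
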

\noindent{\bf Proof.}
The set $\mathcal{T}$ we are after is
$$
\mathcal{T}:=\{(c,d)\in \C^2\mid F(x,y,c,d)\equiv 0~~\text{(as a polynomial in $x$ and $y$)}\}.
$$
Indeed,  for all $(c,d)\not\in \mathcal{T}$,
the set $\gamma_{c,d}$ is an algebraic curve of degree at most $\delta$ or the empty set,
while for $(c,d)\in \mathcal{T}$, the set $\gamma_{c,d}$ equals $\C^2$.
It remains to show that $\mathcal{T}$ is finite, and to establish 
the bound on the cardinality of $\mathcal{T}$.

Write 
\[F(x,y,s,t)=\sum_{i=0}^\delta\sum_{j=0}^{\delta-i} \alpha_{i,j}(s,t)x^iy^j,\]
with suitable bivariate polynomials $\alpha_{i,j}(s,t)$, each of degree at most $\delta$.
For $(c,d)\in \C^2$ we have $F(x,y,c,d)\equiv 0$ if and only if $\alpha_{i,j}(c,d) =0$ for all $i,j$.
That is,
$$
\mathcal{T}=\bigcap_{0\le i+j\le \delta}\{(s,t)\mid \alpha_{i,j}(s,t) =0\}.
$$
Observe that the existence of a common factor of the polynomials $\alpha_{i,j}$, for $0\le i+j\le \delta$, 
would contradict the irreducibility of $F$, or the assumption that none of its first-order derivatives is
identically zero.
By the same token, there must be at least two polynomials $\alpha_{i,j}$
that are not identically zero.
By an application of a variant of B\'ezout's inequality for many curves,
given as Lemma~3.10 in \cite{RSZ15},
we conclude that 
 $\mathcal{T}$ is finite and $|\mathcal{T}|\le \delta^2$.
$\hfill\square$

Let $\mathcal{T}\subset \C^2$ be the set given by Lemma~\ref{lem:quantelim} 
for our $F$. That is, $|\mathcal{T}|\le \delta^2$ and, 
for every $(c,d)\in \C^2\minus\mathcal{T}$, the set $\gamma_{c,d}$ 
is an algebraic curve of degree at most $\delta$ (or empty).

\paragraph{Dual curves.}
We define, in an analogous manner, a dual system of curves by switching the roles of the coordinates 
$x,y$ and the coordinates $s,t$, as follows. 
For every point $(a,b)\in\C^2$, we define
\[
\gamma^*_{a,b} = \{(s,t)\in \C^2\mid F(a,b,s,t) = 0\}.
\]
As above, Lemma \ref{lem:quantelim} implies 
that there exists an exceptional finite set $\mathcal{S}$ of size at most $\delta^2$, such that for every 
$(a,b)\in \C^2\minus \mathcal{S}$ the set $\gamma_{a,b}^*$ is an algebraic curve of degree at most $\delta$
or the empty set.

Note that $(x,y)\in\gamma_{s,t}$ if and only if $(s,t)\in\gamma_{x,y}^*$, and both are  equivalent to $F(x,y,s,t)=0$.

We will analyze what happens when many of these curves have a large common intersection.
The following definition introduces the terminology for this step, which we will use throughout the analysis.

\begin{definition}\label{def:popcurves}
We say that an irreducible algebraic curve $\gamma\subset\C^2$ is a {\em popular curve} if 
there exist at least $\delta^2+1$ distinct points $(s,t)\in\C^2\minus \mathcal{T}$ such that $\gamma\subset\gamma_{s,t}$. 
We denote by $\mathcal C$ the set of all popular curves.\\[0.1cm]
Similarly, we say that an irreducible algebraic curve $\gamma^*\subset\C^2$ is {\em a popular dual curve} if there exist at least $\delta^2+1$ distinct points $(x,y)\in\C^2\minus \mathcal{S}$ such that $\gamma^*\subset\gamma_{x,y}^*$. We denote by $\mathcal{D}$ the set of all popular dual curves. 
\end{definition}

Informally, the following lemma asserts that the ``finite popularity" in Definition~\ref{def:popcurves}
in fact implies ``infinite popularity".

\begin{lemma}\label{lem:associated}
(a) For every $\gamma\in\mathcal C$ there exists an irreducible algebraic curve $\gamma^*\subset\C^2$ of degree at most $\delta$, 
such that $\gamma\subset \gamma_{s,t}$ for all $(s,t)\in \gamma^*$.\\
(b) For every $\gamma^*\in\mathcal D$ there exists an irreducible algebraic curve $\gamma\subset\C^2$ of degree at most $\delta$, 
such that $\gamma^*\subset \gamma_{x,y}^*$ for all $(x,y)\in \gamma$.
\end{lemma}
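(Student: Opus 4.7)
I focus on part (a); part (b) follows by the same argument with the roles of $(x,y)$ and $(s,t)$, and of $\mathcal{T}$ and $\mathcal{S}$, exchanged. Let $g\in\C[x,y]$ be the irreducible polynomial (unique up to scalar) cutting out $\gamma$. The plan is to describe the locus
\[V := \{(s,t)\in \C^2 \mid \gamma\subset \gamma_{s,t}\}\]
as the common zero set of an explicit family of polynomials in $\C[s,t]$ of degree at most $\delta$, and then to argue, using popularity together with a many-curve B\'ezout bound, that $V$ must contain an irreducible curve of degree at most $\delta$; that curve will play the role of $\gamma^*$.

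For the description of $V$, I would write $F(x,y,s,t)=\sum_{i+j\le\delta} f_{ij}(x,y)\,s^i t^j$ with $f_{ij}\in\C[x,y]$. For any fixed $(s_0,t_0)\in\C^2$, the condition $\gamma\subset \gamma_{s_0,t_0}$ is equivalent to the divisibility $g\mid \sum_{i,j} f_{ij}(x,y) s_0^i t_0^j$ in $\C[x,y]$, that is, to the vanishing of this sum in the quotient ring $R:=\C[x,y]/(g)$. Fixing a $\C$-basis $\{e_\ell\}$ of $R$ and expanding the images as $\bar f_{ij}=\sum_\ell c_{ij,\ell}\,e_\ell$, with $c_{ij,\ell}\in\C$, this condition translates into the simultaneous vanishing at $(s_0,t_0)$ of the polynomials
\[q_\ell(s,t):=\sum_{i+j\le\delta} c_{ij,\ell}\,s^it^j\in\C[s,t],\]
each of degree at most $\delta$, only finitely many of which are nonzero. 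Hence $V=\bigcap_\ell Z(q_\ell)$. At least one $q_\ell$ is nonzero: otherwise $g$ would divide every $f_{ij}$ in $\C[x,y]$, hence $g\mid F$ in $\C[x,y,s,t]$, which, combined with the irreducibility of $F$ and with the fact that $g$ does not involve $s,t$, would force $F=c\,g$ for some $c\in\C$, contradicting $F_s\not\equiv 0$.

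It remains to locate $\gamma^*$ inside $V$. Let $h:=\gcd_\ell q_\ell\in\C[s,t]$, a well-defined nonzero polynomial of degree at most $\delta$. If $h$ were constant, the $q_\ell$ would share no common non-constant factor, and the many-curve B\'ezout bound (Lemma~3.10 of \cite{RSZ15}, applied exactly as in the proof of Lemma~\ref{lem:quantelim}) would give $|V|\le\delta^2$; this contradicts the popularity of $\gamma$, which supplies at least $\delta^2+1$ distinct points of $V\minus\mathcal{T}\subset V$. Hence $h$ is non-constant, $Z(h)\subset V$ is a curve of degree at most $\delta$, and any irreducible component $\gamma^*\subset Z(h)$ is an irreducible algebraic curve of degree at most $\delta$ satisfying $\gamma\subset\gamma_{s,t}$ for every $(s,t)\in\gamma^*$, as required. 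The only mildly delicate step is the clean translation of ``$g\mid F(\cdot,\cdot,s_0,t_0)$'' into a finite system of polynomial conditions on $(s_0,t_0)$ of degree at most $\delta$; once this is in place, the dichotomy ``$h$ constant versus non-constant'' mechanically produces $\gamma^*$.
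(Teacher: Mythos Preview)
Your proof is correct and follows the same overall strategy as the paper: both identify the locus $V=\{(s,t):\gamma\subset\gamma_{s,t}\}$, express it as the common zero set of polynomials in $(s,t)$ of degree at most $\delta$, and then combine popularity ($|V|\ge\delta^2+1$) with the many-curve B\'ezout bound (Lemma~3.10 of \cite{RSZ15}) to force $V$ to contain an irreducible curve of degree at most $\delta$. The difference is only in how $V$ is cut out. The paper uses duality, writing $V=S_\gamma=\bigcap_{(x,y)\in\gamma}\gamma^*_{x,y}$, an infinite intersection of the dual curves already in play; you instead take the defining polynomial $g$ of $\gamma$ and expand $\bar F\in(\C[x,y]/(g))[s,t]$ in a basis of the quotient to obtain a finite family $\{q_\ell\}$ with $V=\bigcap_\ell Z(q_\ell)$. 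Your presentation is a bit more explicitly algebraic and sidesteps the need to discard the exceptional points $(x,y)\in\mathcal{S}$ (where $\gamma^*_{x,y}=\C^2$); the paper's version stays within the primal/dual curve framework and avoids the quotient-ring bookkeeping. The two descriptions are essentially equivalent, since evaluation at points of $\gamma$ gives $\C$-linear functionals on $\C[x,y]/(g)$, so each $F(x_0,y_0,s,t)$ is a $\C$-linear combination of your $q_\ell$.
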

\begin{proof}
We prove only part (a) of the lemma, since (b) is fully symmetric.
By definition of $\mathcal{C}$, if $\gamma\in \mathcal{C}$, then there exists a 
set $I\subset \C^2\backslash \mathcal{T}$ of size $|I| = \delta^2+1$ such that 
$\gamma\subset \gamma_{s,t}$ for all $(s,t)\in I$.
This means that, for all $(s,t)\in I$ and for all $(x,y)\in \gamma$, we have
$F(x,y,s,t)=0$, which implies that $(s,t)\in \gamma^*_{x,y}$.
Thus we have $I\subset \gamma^*_{x,y}$ for all $(x,y)\in \gamma$. 

Define
\[S_\gamma := \bigcap_{(x,y)\in \gamma} \gamma^*_{x,y}. \]
Note that for $(x,y)\in \mathcal{S}$ we have $\gamma^*_{x,y} = \C^2$, 
but $\mathcal{S}$ is finite and $\gamma$ is infinite,
so at least one $\gamma^*_{x,y}$ is a curve, and we can safely ignore the pairs $(x,y)\in \mathcal{S}$ in the above intersection.
By the previous paragraph we have $I\subset S_\gamma$.
Since all the curves $\gamma^*_{x,y}$ for $(x,y)\in \gamma\backslash\mathcal{S}$ have degree at most $\delta$, 
Lemma~3.10 of \cite{RSZ15}, mentioned above, implies that $S_\gamma$ either contains a one-dimensional component of degree at most $\delta$, or is finite and consists of at most $\delta^2$ points.
Since $|I|>\delta^2$, the former case must hold.
Let $\gamma^*$ be some irreducible one-dimensional component of $S_\gamma$.

If $(s,t)\in \gamma^*$, then for all $(x,y)\in \gamma$ we have $(s,t)\in \gamma^*_{x,y}$, which by duality implies that $(x,y)\in \gamma_{s,t}$.
Thus for all $(s,t)\in \gamma^*$, we have $\gamma\subset \gamma_{s,t}$, as asserted.
\end{proof}

We refer to $\gamma^*$, which is not necessarily unique, as an {\it associated curve} of $\gamma$.
Note that $\gamma^*$ need not be one of the dual curves $\gamma^*_{x,y}$, but may be only a component of such a curve. 
Nevertheless, we find it convenient to use the star notation also for the associated curves.

\paragraph{Splitting the variables.}
Note that in setting up the curves, we made an arbitrary choice by splitting the four coordinates into the two pairs $x,y$ and $s,t$.
Evidently, since our assumptions on
$F$ are symmetric in the variables $x,y,s,t$, any split of the variables $x,y,s,t$  into two pairs will 
give a set of curves and a set of dual curves with the same properties discussed above.
On the other hand, $F$ itself is not assumed to be symmetric in $x,y,s,t$, and thus certain splits might 
yield better-behaved curves than other splits. 

Note, though, that our analysis handles the first and the second coordinate 
pairs in a fully symmetric manner, 
and that the order of the coordinates in each pair is also irrelevant.
Hence it suffices to consider only the three coordinate splits which,
without loss of generality, are
$\{(x,y), (s,t)\}$, $\{(s,y),(x,t)\}$, and $\{(t,y),(s,x)\}$.
To keep the notation simple, we represent each of these splits 
by a permutation $\sigma$ of $(x,y,s,t)$, 
and we introduce the notation 
\[F_\sigma(x,y,s,t):=F(\sigma(x),\sigma(y),\sigma(s),\sigma(t)),\]
with the understanding that the corresponding split is into the pairs 
$(\sigma(x),\sigma(y))$, and $(\sigma(s),\sigma(t))$.
The three relevant permutations are thus
$(x,y,s,t)$, $(s,y,x,t)$ and $(t,y,s,x)$.
 Clearly, $F$ is of the special form described in Theorem~\ref{thm:main}$(ii)$ if and only if $F_\sigma$ is, for any permutation $\sigma$.
For each $\sigma$, we define the curves (and the dual curves) exactly as above, only with $F_\sigma$ replacing $F$.

The main step in our proof is the following key proposition, which shows that 
{\it for some permutation} $\sigma$ of the coordinates (out of the three that we consider),
we can exclude (or rather control) the popular curves and popular dual curves,
unless $F_\sigma$ (and thus $F$) satisfies property $(ii)$ of Theorem~\ref{thm:main}.
The proof of the proposition is given in Section~\ref{sec:proofofkey}.
Note that its statement is only about $F$ and does not involve the specific sets $A,B,C,D$.

\begin{proposition}\label{prop:keyprop}
Either $F$ satisfies property $(ii)$ of Theorem \ref{thm:main}, or, for some permutation of the coordinates
$x,y,s,t$, both of the following properties hold.\\
$(a)$ There exists a one-dimensional variety $\mathcal{T}'\subset\C^2$ of degree $O(\delta^{4})$ containing $\mathcal{T}$, 
such that for every $(s,t)\in\C^2\minus \mathcal{T}'$, the curve $\gamma_{s,t}$ does not contain a popular curve.\\
$(b)$ There exists a one-dimensional variety $\mathcal{S}'\subset\C^2$ of degree $O(\delta^{4})$ containing $\mathcal{S}$,
such that for every $(x,y)\in\C^2\minus \mathcal{S}'$, the dual curve $\gamma_{x,y}^*$ does not contain a popular dual curve.
\end{proposition}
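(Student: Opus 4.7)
The plan is to prove the contrapositive: assume that for each of the three permutations $\sigma$, either (a) or (b) fails for $F_\sigma$, and deduce that $F$ satisfies property $(ii)$. By the duality built into Lemma \ref{lem:associated}, every popular curve has an associated popular dual curve, so the two failure modes go hand in hand, and I focus on the case in which (a) fails for all three splits, yielding for each split a positive-dimensional algebraic family of popular curves in the relevant plane.

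First, I would convert ``(a) fails'' into ``there is an infinite algebraic family of popular curves''. If only $k$ popular curves $\gamma_1,\ldots,\gamma_k$ existed, then by Lemma \ref{lem:associated} the set $\mathcal{T}\cup\gamma_1^*\cup\cdots\cup\gamma_k^*$ would be a one-dimensional variety of degree at most $\delta^2+k\delta$ containing every $(s,t)$ for which $\gamma_{s,t}$ has a popular component; taking $k=O(\delta^3)$ produces a $\mathcal{T}'$ of degree $O(\delta^4)$. Hence failure of (a) forces infinitely many popular curves, which, sitting in the Hilbert scheme of plane curves of degree $\le\delta$, must move in a one-parameter algebraic family $\{\gamma_c\}$. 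Locally around a generic point, such a family is presented as the level sets of an analytic function $\Phi(x,y)$, and the associated family $\{\gamma_c^*\}$ yields $\Psi(s,t)$; by construction $F(x,y,s,t)=0$ holds locally if and only if $\Phi(x,y)+\Psi(s,t)=0$, after absorbing a sign into $\Psi$. This is essentially the local-analytic step carried out in \cite{RSZ15}.

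Applying the previous two steps to each of the three permutations produces three local analytic equivalents of $F=0$,
\[\Phi(x,y)+\Psi(s,t)=0,\quad \tilde\Phi(s,y)+\tilde\Psi(x,t)=0,\quad \hat\Phi(t,y)+\hat\Psi(s,x)=0,\]
all cutting out the same germ of hypersurface on a Zariski-open subset of $Z(F)$. Compatibility of the first and second factorizations should force $\partial_x\partial_y\Phi\equiv 0$ on the zero set, and hence identically by analytic continuation and the irreducibility of $F$, giving $\Phi(x,y)=\varphi_1(x)+\varphi_2(y)$; symmetrically the first and third force $\Psi(s,t)=\varphi_3(s)+\varphi_4(t)$. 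Substituting yields $\varphi_1(x)+\varphi_2(y)+\varphi_3(s)+\varphi_4(t)=0$, which is property $(ii)$, with the exceptional $Z_0$ being the (at most two-dimensional) locus where the above analytic extractions break down together with the singular locus of $Z(F)$.

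The hardest part will be this last step: extracting the additive separation $\Phi=\varphi_1+\varphi_2$ from the compatibility of two distinct bipartitions of the four variables, and patching the resulting local analytic functions to a statement valid off a genuinely two-dimensional subvariety of $Z(F)$, is precisely the new difficulty in four variables flagged in the Discussion. By contrast, the B\'ezout counting that bounds the degree of $\mathcal{T}'$ in terms of the number of popular curves, and the passage from a one-parameter family to local level-set coordinates, are both essentially routine adaptations of their three-variable counterparts in \cite{RSZ15}.
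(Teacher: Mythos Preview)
Your overall strategy is sound and parallels the paper's, but the implementation differs and two steps have real gaps.

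The paper does not argue via one-parameter families in a Hilbert scheme. Instead it introduces an explicit auxiliary variety $\widetilde W\subset\C^6$, cut out by $F(x,y,s,t)=F(x',y',s,t)=0$ together with the Jacobian-type equation $G:=F_s(x,y,s,t)F_t(x',y',s,t)-F_s(x',y',s,t)F_t(x,y,s,t)=0$, and runs a dimension dichotomy. Each popular curve $\gamma$ with associated curve $\gamma^*$ contributes an irreducible three-dimensional component $\gamma\times\gamma\times\gamma^*$ of $\widetilde W$; so if $\dim\widetilde W=3$, B\'ezout bounds the number of components by $O(\delta^3)$, simultaneously bounding the popular curves, the associated curves, and (since every popular dual curve turns out to be an associated curve of some popular curve) the popular dual curves, yielding both (a) and (b) at once. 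This is where your first gap lies: the sentence ``taking $k=O(\delta^3)$ produces a $\mathcal{T}'$ of degree $O(\delta^4)$; hence failure of (a) forces infinitely many popular curves'' is a non sequitur---nothing you wrote rules out a finite but uncontrolled number of popular curves (say $\delta^{100}$). You need precisely the a priori dichotomy the paper extracts from the component count of $\widetilde W$, and your Hilbert-scheme aside does not supply it without substantial further work.

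When $\dim\widetilde W=4$ for all three splits, the paper deduces that (locally on $Z(F)$) the ratio $F_s/F_t$ depends only on $(s,t)$, and likewise for the other splits, obtaining $y_s/y_t=h_1(s,t)$, $y_s/y_x=h_2(s,x)$, $y_t/y_x=h_3(t,x)$; solving this overdetermined system gives $y=h(p(x)+q(s)+r(t))$. Your three local factorizations $\Phi(x,y)+\Psi(s,t)=0$, $\tilde\Phi(s,y)+\tilde\Psi(x,t)=0$, $\hat\Phi(t,y)+\hat\Psi(s,x)=0$ would in fact yield exactly these three ratio identities, so your route converges with the paper's at this point---but the mechanism you name is wrong. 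The claim ``compatibility forces $\partial_x\partial_y\Phi\equiv 0$'' is false as stated: $\Phi$ is only a level-set coordinate, determined up to post-composition with an invertible analytic function, and for a generic representative $\partial_x\partial_y\Phi\not\equiv 0$ (take $F=xy-st$ with $\Phi=xy$). What is true is that \emph{some} reparametrization of $\Phi$ is additively separable, and producing that reparametrization is exactly the content of the paper's PDE argument.
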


\subsection{Incidences}
We continue with the analysis, assuming 
that $F$ does not satisfy property $(ii)$ of Theorem \ref{thm:main},
and that Proposition~\ref{prop:keyprop} holds,
for some permutation of $x,y,s,t$.
By relabeling the variables if necessary, we may assume that the 
corresponding coordinate split is the one we have been
using, namely $\{(x,y),(s,t)\}$.

We introduce the following set of points and {\em multiset} of curves (some of the curves may coincide or overlap as point sets):
\[
\Pi:=(A\times B)\minus \mathcal{S'}~~~~~~\text{and}~~~~~~\Gamma := \{\gamma_{c,d}\mid (c,d)\in(C\times D)\minus\mathcal{T'}\},
\]
where $\mathcal T'$ and $\mathcal S'$ are the varieties produced in Proposition~\ref{prop:keyprop}. 
By definition, and as was already pointed out, $(a,b,c,d)\in Z(F)$ if and only if 
$(a,b)\in \gamma_{c,d}$ and $(c,d)\in \gamma^*_{a,b}$.
This lets us relate $M$, the quantity that we want to bound, 
to $I(\Pi,\Gamma)$, the number of incidences between these points and curves;
since curves in $\Gamma$ may coincide or overlap, 
these incidences should be counted with the multiplicity of the relevant curves.

\begin{lemma}\label{lem:ItoM}
The quantity $M:=|Z(F)\cap (A\times B\times C\times D)|$ satisfies
$$
M \le I(\Pi, \Gamma) + 
O\left(|A||B|+|C||D|+|A||C|+|A||D|+|B||C|+|B||D|\right),
$$
where the constant of proportionality depends (polynomially) on the degree $\delta$ of $F$.
\end{lemma}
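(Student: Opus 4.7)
The plan is straightforward once the right decomposition is in place. Since $(a,b,c,d)\in Z(F)$ if and only if $(a,b)\in\gamma_{c,d}$, I would write $M=M_0+M_1+M_2$, where $M_0$ counts quadruples with $(a,b)\notin\mathcal{S}'$ and $(c,d)\notin\mathcal{T}'$, $M_1$ counts quadruples with $(a,b)\in\mathcal{S}'\cap(A\times B)$, and $M_2$ counts quadruples with $(c,d)\in\mathcal{T}'\cap(C\times D)$. By the very definition of $\Pi$ and $\Gamma$, and because $\Gamma$ is a multiset (so incidences are counted with multiplicity in $I(\Pi,\Gamma)$), $M_0$ is exactly $I(\Pi,\Gamma)$. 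It then remains to bound $M_1$ and $M_2$, and by the perfect symmetry between the coordinate pairs $(x,y)$ and $(s,t)$ it suffices to estimate $M_2$; the bound on $M_1$ follows by swapping the roles of $A\times B$ and $C\times D$.

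To bound $M_2$, I would split further into the contribution from $(c,d)\in\mathcal{T}$ and from $(c,d)\in\mathcal{T}'\setminus\mathcal{T}$. For the first piece, Lemma~\ref{lem:quantelim} gives $|\mathcal{T}|\le\delta^2$, and for each $(c,d)\in\mathcal{T}$ we have $\gamma_{c,d}=\C^2$, so every pair in $A\times B$ contributes; the total is thus $O(\delta^2|A||B|)=O(|A||B|)$. For the second piece, $\gamma_{c,d}$ is a genuine algebraic curve of degree at most $\delta$, and a standard component-by-component argument (splitting off the at most $\delta$ axis-parallel line components, each of which meets $A\times B$ in at most $\max(|A|,|B|)$ points, and bounding the remaining components by B\'ezout against each line $x=a$ or $y=b$) gives $|\gamma_{c,d}\cap(A\times B)|=O(\delta(|A|+|B|))$.

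At this point the remaining task is to count the pairs $(c,d)\in(\mathcal{T}'\setminus\mathcal{T})\cap(C\times D)$, and this is where the information from Proposition~\ref{prop:keyprop} is used: $\mathcal{T}'$ is a one-dimensional variety of degree $O(\delta^4)$, so the same axis-parallel-versus-transverse-components analysis applied to $\mathcal{T}'$ and the grid $C\times D$ yields $|\mathcal{T}'\cap(C\times D)|=O(\delta^4(|C|+|D|))$. Combining, $M_2=O(|A||B|)+O(\delta^5(|A|+|B|)(|C|+|D|))$, which expands exactly into the allowed cross terms $|A||C|$, $|A||D|$, $|B||C|$, $|B||D|$, and the symmetric treatment of $M_1$ adds $O(|C||D|)$ together with the same cross terms. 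There is no real obstacle here; the only point that needs care is the grid-intersection count for a curve with possible axis-parallel line components, which is routine and already appears in essentially the same form in \cite{RSZ15}.
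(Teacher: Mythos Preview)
Your proposal is correct and follows essentially the same approach as the paper's own proof: decompose according to whether $(a,b)\in\mathcal{S}'$ or $(c,d)\in\mathcal{T}'$, handle the at most $\delta^2$ points of $\mathcal{T}$ (resp.\ $\mathcal{S}$) with the trivial bound, and for the remaining pairs combine the grid-intersection bound for $\mathcal{T}'$ with that for each curve $\gamma_{c,d}$. The only cosmetic difference is that the paper invokes the Schwartz--Zippel lemma (as in \cite[Lemma~A.4]{RSZ15}) for the grid-intersection counts, whereas you spell out the equivalent axis-parallel/transverse component argument explicitly; the resulting bounds and constants are the same.
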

\begin{proof}
Any $(a,b,c,d)$ in $Z(F)\cap (A\times B\times C\times D)$ that is not counted in $I(\Pi,\Gamma)$ must have
$(a,b)\in\mathcal{S'}$ or $(c,d)\in\mathcal{T'}$. 

We apply the Schwartz-Zippel lemma 
(see \cite[Lemma A.4]{RSZ15})
to the curves $\mathcal S'$ and $\mathcal T'$, each of degree $O(\delta^4)$.
To be precise, we apply the lemma to the purely one-dimensional components of $\mathcal{S}'$, 
and add the number of zero-dimensional components of $\mathcal{S}'$, which,
as follows from the proof of Proposition~\ref{prop:keyprop} (given in Section~\ref{sec:proofofkey}), 
is only $O(\delta^2)$. 
We do the same for $\mathcal{T'}$.
It follows that
$|\mathcal{S'}\cap (A\times B)|=O(\delta^{4}|A|+\delta^{4}|B|)$,
and similarly
$|\mathcal{T'}\cap (C\times D) | = O(\delta^{4}|C|+\delta^{4}|D|)$.

For any $(c,d)\in\mathcal{T}'\minus \mathcal{T}$, we have $|\gamma_{c,d}\cap (A\times B)| =O(\delta|A|+\delta|B|)$, 
and for any $(a,b)\in \mathcal{S}'\minus \mathcal{S}$ we have 
$|\gamma_{a,b}^*\cap (C\times D)| =O(\delta|C|+\delta|D|)$;
both bounds follow from additional applications of
the Schwartz-Zippel lemma.
Thus the contribution from the excluded pairs 
$(a,b)\in \mathcal{S}'\backslash \mathcal{S}$ and $(c,d)\in\mathcal{T}'\backslash\mathcal{T}$
is $O(\delta^{5}(|A|+|B|)(|C|+|D|))$.

For $(c,d)\in \mathcal{T}$ (resp., $(a,b)\in \mathcal{S}$), the set $\gamma_{c,d}$ (resp., $\gamma_{a,b}^*$) could be
all of $\C^2$, so we only have the trivial bounds $|\gamma_{c,d}\cap (A\times B)| =O(|A||B|)$ and $|\gamma_{a,b}^*\cap (C\times D)| =O(|C||D|)$.
Fortunately, we have $|\mathcal{T}| \le \delta^2$ and $|\mathcal{S}|\le \delta^2$,
so the contribution from these remaining pairs is only $O(\delta^2|A||B|+\delta^2|C||D|)$.
\end{proof}

We now define exactly in what sense we require the curves to have well-behaved intersections.

\begin{definition}\label{def:boundedmult}
Let $\Pi$ be a finite set of points in $\C^2$, 
and let $\Gamma$ be a finite multiset of curves in $\C^2$, which can coincide or overlap.
We say that the system $(\Pi,\Gamma)$ has \emph{$(\lambda,\mu)$-bounded multiplicity} 
if\,\footnote{In both (a) and (b) the curves should be counted with their multiplicity.}\\ 
$(a)$ for any curve $\gamma\in \Gamma$,
there are at most $\lambda$ other curves $\gamma'\in \Gamma$ such that 
$|\gamma\cap\gamma'|>\mu$; and\\
$(b)$ for any point $p\in \Pi$, 
there are at most $\lambda$ other points $p'\in \Pi$ such that there are more than $\mu$ curves that contain both $p$ and $p'$.
\end{definition}

We use the following incidence bound, taken from our previous work \cite{RSZ15}, 
where it was deduced from the incidence bound in Solymosi and De Zeeuw~\cite{SZ15}.

\begin{proposition}\label{prop:incbound}
Let $A_1,A_2$ be finite subsets of $\C$ and $\Pi \subset A_1\times A_2$,
and let $\Gamma$ be a finite multiset of algebraic curves in $\C^2$ of degree at most $\delta$, 
such that the system $(\Pi,\Gamma)$ has $(\lambda,\mu)$-bounded multiplicity.
Then 
\[I(\Pi, \Gamma) 
=O\left(|A_1|^{2/3}|A_2|^{2/3}|\Gamma|^{2/3}+|A_1||A_2|+|\Gamma|\right),
\]
where the constant of proportionality depends (polynomially) on $\lambda$ and $\mu$.
\end{proposition}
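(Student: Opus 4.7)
The plan is to reduce Proposition \ref{prop:incbound} to the point-curve incidence theorem of Solymosi and De Zeeuw in \cite{SZ15}, which gives a bound of the same shape when $\Gamma$ is a \emph{set} of distinct irreducible curves of degree at most $\delta$, under standard genericity assumptions on both the curve side (any two distinct curves meet in at most a constant number of points) and the point side (any two distinct points lie on at most a constant number of common curves). The $(\lambda,\mu)$-bounded multiplicity hypothesis of our proposition is tailored so that this reduction can be performed at the cost of only a constant factor depending polynomially on $\delta$, $\lambda$, and $\mu$.

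First, I would decompose every curve of $\Gamma$ into its at most $\delta$ irreducible components, which inflates $|\Gamma|$ and the incidence count by at most a factor of $\delta$. Second, I would bound the multiplicity of every distinct irreducible curve appearing in the resulting multiset: if some $\gamma$ occurs $k$ times, each of its other $k-1$ copies shares all of $\gamma$ (and in particular more than $\mu$ points) with it, so condition $(a)$ forces $k\le\lambda+1$. Passing to the underlying set $\Gamma_0$ of distinct irreducible curves therefore loses at most a factor $\lambda+1$ in $I(\Pi,\Gamma)$. Third, I would build an auxiliary graph $G$ on $\Gamma_0$ in which $\gamma$ and $\gamma'$ are adjacent whenever $|\gamma\cap\gamma'|>\mu$; condition $(a)$ bounds the maximum degree of $G$ by $\lambda$, so a greedy proper vertex coloring yields a partition
\[\Gamma_0=\Gamma_0^{(1)}\cup\cdots\cup\Gamma_0^{(\lambda+1)}\]
in which every color class consists of distinct irreducible curves meeting pairwise in at most $\mu$ points. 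Condition $(b)$ plays the symmetric role on the point side, ensuring that the point-multiplicity hypothesis needed to invoke \cite{SZ15} is satisfied (after possibly discarding $O(\lambda)$ heavy pairs, whose contribution can be absorbed into the error terms). I would then apply the Solymosi–De Zeeuw bound to each pair $(\Pi,\Gamma_0^{(j)})$ separately and sum over $j$, using the concavity of $x\mapsto x^{2/3}$ to keep the $|\Gamma|^{2/3}$-type term, so that the final constant depends only polynomially on $\delta$, $\lambda$, and $\mu$.

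The main obstacle I anticipate is bookkeeping: ensuring that the curve-side hypothesis inherited from condition $(a)$ and the point-side hypothesis inherited from condition $(b)$ are simultaneously preserved through the irreducible-component decomposition, the multiplicity reduction, and the coloring steps, and that the ``trivial'' error term in the resulting bound takes the coarse form $|A_1||A_2|+|\Gamma|$ rather than the sharper $|\Pi|+|\Gamma|$ that a direct application of \cite{SZ15} would yield. No new ideas beyond those already present in \cite{SZ15} are required, but the combinatorial reduction must be executed carefully enough to preserve the polynomial dependence of the implicit constant on $\delta$, $\lambda$, and $\mu$.
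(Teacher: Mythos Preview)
The paper does not give a proof of this proposition; it simply quotes it from \cite{RSZ15}, noting that it was deduced there from the Solymosi--De~Zeeuw incidence bound \cite{SZ15}. Your outline---decompose into irreducible components, bound multiplicities and color via conditions $(a)$ and $(b)$, then apply \cite{SZ15} to each class---is exactly that deduction, so your approach agrees with the paper's (cited) argument.
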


\begin{lemma}\label{boundedmul}
If $F$ does not satisfy property $(ii)$ of Theorem \ref{thm:main},
then $(\Pi,\Gamma)$ is a system that has $(\delta^3, \delta^2)$-bounded multiplicity.
\end{lemma}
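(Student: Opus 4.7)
The plan is to verify parts $(a)$ and $(b)$ of Definition~\ref{def:boundedmult} separately, by essentially the same argument applied in the primal and dual settings, combining B\'ezout's theorem with Proposition~\ref{prop:keyprop}.

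For part $(a)$, I would fix a curve $\gamma = \gamma_{c,d}\in\Gamma$, where $(c,d)\in (C\times D)\setminus\mathcal{T}'$, and bound the number of indices $(c',d')\in (C\times D)\setminus\mathcal{T}'$, distinct from $(c,d)$, for which $|\gamma_{c,d}\cap\gamma_{c',d'}|>\delta^2$. Since both curves have degree at most $\delta$, B\'ezout forces them to share a common irreducible component whenever this intersection exceeds $\delta^2$. Now $\gamma_{c,d}$ is a plane curve of degree at most $\delta$, so it has at most $\delta$ irreducible components. For each such component $\gamma_0$, Proposition~\ref{prop:keyprop}$(a)$ (applied to the fact that $(c,d)\notin\mathcal{T}'$) asserts that $\gamma_0$ is not popular, hence by Definition~\ref{def:popcurves} there are at most $\delta^2$ points $(s,t)\in\C^2\setminus\mathcal{T}$ with $\gamma_0\subset\gamma_{s,t}$. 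Since $(C\times D)\setminus\mathcal{T}'\subseteq\C^2\setminus\mathcal{T}$, each component of $\gamma_{c,d}$ is shared with at most $\delta^2$ other curves $\gamma_{c',d'}$ in $\Gamma$. Multiplying by the at most $\delta$ components of $\gamma_{c,d}$ yields the bound $\delta^3$, establishing $(a)$.

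For part $(b)$, I would fix $p=(a,b)\in\Pi$ and translate the question to the dual side: a curve $\gamma_{c,d}\in\Gamma$ contains both $p=(a,b)$ and $p'=(a',b')$ precisely when $(c,d)\in\gamma^*_{a,b}\cap\gamma^*_{a',b'}$. If $\gamma^*_{a,b}$ and $\gamma^*_{a',b'}$ share no common irreducible component, B\'ezout gives $|\gamma^*_{a,b}\cap\gamma^*_{a',b'}|\le\delta^2$, so at most $\delta^2$ curves in $\Gamma$ contain both points, which is fine. Otherwise they share an irreducible component $\gamma^*_0$; since $(a,b)\notin\mathcal{S}'$, Proposition~\ref{prop:keyprop}$(b)$ says $\gamma^*_{a,b}$ contains no popular dual curve, so $\gamma^*_0$ is not popular and therefore lies in at most $\delta^2$ dual curves $\gamma^*_{x,y}$ with $(x,y)\in\C^2\setminus\mathcal{S}$. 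Since $\gamma^*_{a,b}$ has at most $\delta$ components, we again get at most $\delta\cdot\delta^2=\delta^3$ bad partners $(a',b')$, giving $(b)$.

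There is no real obstacle here beyond a careful bookkeeping of the degree bounds; the entire content of the lemma is that Proposition~\ref{prop:keyprop} is exactly what is needed to convert the naive B\'ezout argument, which would fail when popular components are present, into the desired $(\delta^3,\delta^2)$-bounded multiplicity. The only subtle point worth flagging is that curves in $\Gamma$ are counted with multiplicity: two distinct parameters $(c,d)\ne(c',d')$ can give the same point set $\gamma_{c,d}=\gamma_{c',d'}$, but such a coincidence is itself recorded by the case of a shared component in the argument for $(a)$, and is therefore already accounted for within the $\delta^3$ bound.
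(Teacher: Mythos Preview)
Your proof is correct and follows essentially the same argument as the paper's own proof: in both parts you (and the paper) use Proposition~\ref{prop:keyprop} to ensure that no irreducible component of $\gamma_{c,d}$ (resp.\ $\gamma^*_{a,b}$) is popular, then combine the at-most-$\delta$ components with the at-most-$\delta^2$ occurrences per non-popular component to get the $\delta^3$ bound, while B\'ezout handles the remaining pairs. Your remark about multiplicities is also apt and matches the paper's implicit handling of coinciding curves.
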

\begin{proof}
By our choice of $\Gamma$, every $\gamma\in \Gamma$ does not contain a popular curve. 
Thus, by definition, each of its at most $\delta$ irreducible components 
is common to at most $\delta^2$ other curves of $\Gamma$.
So we get a total of at most $\delta^3$ curves of $\Gamma$ that share an irreducible component with $\gamma$.
For any other curve $\gamma'\in \Gamma$, which is not one of the at most $\delta^3$ excluded curves,
the intersection $\gamma\cap \gamma'$ contains at most $\delta^2$ points, by B\'ezout's 
theorem. This shows that the property in Definition~\ref{def:boundedmult}(a) holds.

Similarly, by our choice of $\Pi$, for every $p=(x,y)\in \Pi$, the curve $\gamma_{x,y}^*$
does not contain a popular dual curve. Thus, by definition, each of its irreducible components is 
shared by at most $\delta^2$ dual curves $\gamma_{x',y'}^*$, for $p'=(x',y')\in\Pi$.
So we get a total of at most $\delta^3$ points $p'$ of $\Pi$ with this property.
For any other point $p'\in \Pi$, which is not one of the at most $\delta^3$ excluded ones, we have
$|\gamma_{x,y}^*\cap\gamma_{x',y'}^*|\le \delta^2$, by B\'ezout's theorem. But this, by our
definition of dual curves, exactly means that
the number of curves of the form $\gamma_{c,d}$ that pass through both $p$ and $p'$
is at most $\delta^2$. 
This establishes the property in Definition~\ref{def:boundedmult}(b), 
and thus proves the lemma.
\end{proof}

Combining Lemma~\ref{boundedmul} with Proposition~\ref{prop:incbound}, and then with Lemma~\ref{lem:ItoM},
we conclude that
\begin{align*}
|Z(F)\cap (A\times B\times C\times D)| = O\Big(|A|^{2/3}|B|^{2/3}|C|^{2/3}|D|^{2/3}+|A||B|+ |A||C|\\+|A||D|+|B||C|+|B||D|+|C||D|\Big),
\end{align*}
for every $A,B,C,D\subset\C$.
This completes the proof of Theorem~\ref{thm:main} (modulo the still missing proof 
of Proposition~\ref{prop:keyprop}). $\hfill\square$

\section{Proof of Proposition \ref{prop:keyprop}}\label{sec:proofofkey}

\subsection{The varieties \texorpdfstring{$V,W,\widetilde{W}$}{V,W}}

We define the variety
\[
V: = \{(x,y,x',y',s,t)\in \C^6\mid F(x,y,s,t)=0, F(x',y',s,t)=0\}.
\]

\begin{lemma}\label{lem:Vdim4}
The variety $V$ has dimension $4$.
\end{lemma}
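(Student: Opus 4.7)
The plan is to realise $V$ as the intersection of two distinct irreducible hypersurfaces in $\C^6$ and apply Krull's Hauptidealsatz. Write the coordinates of $\C^6$ as $(x,y,x',y',s,t)$ and set
\[
H_1 := Z\bigl(F(x,y,s,t)\bigr), \qquad H_2 := Z\bigl(F(x',y',s,t)\bigr),
\]
so that $V = H_1 \cap H_2$. Since $F$ is irreducible in $\C[x,y,s,t]$, it remains irreducible as an element of $\C[x,y,x',y',s,t]$ (adjoining indeterminates preserves irreducibility), and the same holds for $F(x',y',s,t)$. Hence $H_1$ and $H_2$ are irreducible hypersurfaces in $\C^6$, each of dimension~$5$.

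Next I would observe that $H_1 \neq H_2$. By the hypothesis $F_x \not\equiv 0$, the polynomial $F(x,y,s,t)$ has positive degree in $x$, whereas $F(x',y',s,t)$ does not involve $x$ at all. Thus these two irreducible polynomials are not scalar multiples of one another, so the principal ideals they generate in $\C[x,y,x',y',s,t]$ are different, and the two irreducible hypersurfaces are distinct. Since $H_1$ is irreducible of dimension~$5$ and $H_1 \neq H_2$, we have $H_1 \not\subset H_2$; equivalently, the regular function $F(x',y',s,t)$ does not vanish identically on $H_1$, so it is a non-zero-divisor in the (integral) coordinate ring of $H_1$. Krull's Hauptidealsatz applied on $H_1$ to the hypersurface cut out by this function then yields that every irreducible component of $H_1 \cap H_2 = V$ has codimension exactly $1$ in $H_1$, hence dimension~$4$. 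In particular $\dim V = 4$.

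The argument is essentially formal, so I do not expect a real obstacle; the only substantive step is the distinctness $H_1 \neq H_2$, which follows at once from the assumed non-vanishing of $F_x$ (or, equivalently, from $F_{x'}\not\equiv 0$, obtained by relabeling). As a sanity check, the same dimension can alternatively be read off from the projection $\pi\colon V \to \C^2$, $(x,y,x',y',s,t)\mapsto(s,t)$: outside the finite exceptional set $\mathcal{T}$ of Lemma~\ref{lem:quantelim} the fibre is $\gamma_{s,t}\times\gamma_{s,t}$, which is two-dimensional whenever $\gamma_{s,t}$ is a curve, and the image is two-dimensional, so the fibre-dimension theorem gives $\dim V = 2+2 = 4$ again.
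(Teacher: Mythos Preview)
Your proof is correct and essentially the same as the paper's: both realise $V$ as the intersection of two hypersurfaces in $\C^6$ and use $F_x\not\equiv 0$ to show that $F(x,y,s,t)$ and $F(x',y',s,t)$ are not scalar multiples, forcing $\dim V=4$ rather than $5$. The paper additionally checks $V\neq\emptyset$ via the diagonal $\{(x,y,x,y,s,t):F(x,y,s,t)=0\}$, which you should state explicitly for the Hauptidealsatz step (your fibre-dimension sanity check covers it implicitly).
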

\begin{proof}
Recall our assumptions that $F$ is irreducible and that none of $F_x,F_y,F_s,F_t$ is identically zero.
The variety $V$ is not empty, since it contains the point $(x,y,x,y,s,t)$ for any point $(x,y,s,t)\in Z(F)$.
Since $V$ is the common zero set of two nontrivial polynomials in $\C^6$, 
it follows by standard arguments in algebraic geometry (see, e.g., \cite[Lemma A.1]{RSZ15}) that $V$ has dimension either $4$ or $5$.
Moreover, since $F$ is irreducible, it follows that
$V$ is of dimension $5$ if and only if $F(x,y,s,t)\equiv \alpha F(x',y',s,t)$, 
for some constant $\alpha\in \C$, 
where this is interpreted as a polynomial identity in $\C[x,y,x',y',s,t]$.
Noting that, by the assumption of Theorem \ref{thm:main}, the derivative of $ F(x',y',s,t)$ with respect to 
the variable $x$ is identically zero, for any $\alpha$, whereas
the derivative of $F(x,y,s,t)$ with respect to $x$ is not, we conclude that this identity is impossible, and thus $V$ is four-dimensional.
\end{proof}

Let $G$ be the polynomial in $\C[x,y,x',y',s,t]$ given by
$$
G=F_s(x,y,s,t)F_t(x',y',s,t) - F_s(x',y',s,t)F_t(x,y,s,t).
$$

Consider the subvariety $W:=V\cap Z(G)$ of $V$.
Note that in case the set $\mathcal T$ (from Lemma \ref{lem:quantelim}) is non-empty, the variety $W$ (and hence also $V$) contains the subvariety
$V_0:=\C^4\times \mathcal T$, and the latter is four-dimensional, as it is 
the union of a finite number of $4$-flats in $\C^6$.
What is relevant to us in our analysis are the components of $W$ that are not contained in $V_0$.
For this reason we replace $W$ by the subvariety $\widetilde W\subset W$ which is defined as the union of the irreducible components
of $W$ that are not contained in $V_0$.

The following lemma shows the significance of $G$ (and $\widetilde{W}$): 
It serves to detect popular curves.

\begin{lemma}\label{lem:inW}
Let $\gamma$ be a popular curve and let $\gamma^*$ be an associated curve of $\gamma$. 
Then $\gamma\times\gamma\times \gamma^*\subset \widetilde W$.
\end{lemma}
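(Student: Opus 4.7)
The plan is to verify successively that $\gamma\times\gamma\times\gamma^*$ lies in $V$, then in $W$, and finally in $\widetilde W$, each step adding one of the defining conditions.

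First, I would establish the containment in $V$ directly from Lemma~\ref{lem:associated}(a): since $\gamma^*$ is an associated curve of $\gamma$, we have $\gamma\subset\gamma_{s,t}$ for every $(s,t)\in\gamma^*$, i.e., $F(x,y,s,t)=0$ whenever $(x,y)\in\gamma$ and $(s,t)\in\gamma^*$. Applying this twice, to both $(x,y)$ and $(x',y')$ in $\gamma$, the two defining equations of $V$ hold on the entire product.

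Next I would show that $G$ vanishes on $\gamma\times\gamma\times\gamma^*$. Fix a smooth point $(s_0,t_0)$ of $\gamma^*$, which exists because $\gamma^*$ is an irreducible curve, and let $u\mapsto(s(u),t(u))$ be a local holomorphic parametrization of $\gamma^*$ near $(s_0,t_0)$ with nonzero derivative $(s'(0),t'(0))$. For any fixed $(x,y)\in\gamma$, the identity $F(x,y,s(u),t(u))\equiv 0$ holds in $u$, so differentiating at $u=0$ gives
\[
F_s(x,y,s_0,t_0)\,s'(0) + F_t(x,y,s_0,t_0)\,t'(0) = 0.
\]
Thus the vector $(F_s(x,y,s_0,t_0),F_t(x,y,s_0,t_0))$ lies in the one-dimensional subspace of $\C^2$ annihilated by the nonzero covector $(s'(0),t'(0))$. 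The same holds with $(x',y')\in\gamma$ in place of $(x,y)$, so the two gradient vectors are linearly dependent; their $2\times 2$ determinant is exactly $G(x,y,x',y',s_0,t_0)$, hence zero. Since smooth points form a Zariski-dense subset of $\gamma^*$, and $G$ is a polynomial, it vanishes on the whole Zariski-closed product $\gamma\times\gamma\times\gamma^*$. Combined with the previous paragraph, this gives $\gamma\times\gamma\times\gamma^*\subset W$.

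Finally, to upgrade to $\widetilde W$: the product $\gamma\times\gamma\times\gamma^*$ is irreducible, being a product of irreducible varieties, so it is contained in a single irreducible component $W_0$ of $W$. If $W_0$ were contained in $V_0=\C^4\times\mathcal T$, then projecting $\gamma\times\gamma\times\gamma^*$ onto the last two coordinates would force $\gamma^*\subset\mathcal T$, which is impossible since $\mathcal T$ is finite (of size at most $\delta^2$) while $\gamma^*$ is a curve. Hence $W_0\subset\widetilde W$, completing the proof. The one delicate point is the differentiation step producing $G=0$: it relies on the existence of a smooth point of $\gamma^*$ with nonzero tangent (from irreducibility of $\gamma^*$) and on Zariski density to extend the pointwise vanishing of $G$ to the full product; everything else is essentially unpacking definitions.
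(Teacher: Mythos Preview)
Your proof is correct and follows essentially the same approach as the paper: verify containment in $V$ from the definition of associated curve, then show $G$ vanishes by observing that the two gradient vectors $(F_s,F_t)$ lie in the same one-dimensional subspace determined by the tangent to $\gamma^*$, and finally pass to $\widetilde W$ using finiteness of $\mathcal T$. Your parametrization-and-differentiate argument at smooth points of $\gamma^*$ is a slight repackaging of the paper's tangent-line argument (the paper instead looks at nonsingular points of $\gamma^*_{x,y}$ and $\gamma^*_{x',y'}$ and handles the singular case separately), and your final step makes the use of irreducibility of $\gamma\times\gamma\times\gamma^*$ explicit, which the paper leaves implicit.
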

\begin{proof}

Let $\gamma,\gamma^*$ be as in the statement, and consider any pair  of points
$(x,y),(x',y')\in \gamma$. 
By the definition of $\gamma^*$ in Lemma \ref{lem:associated}, we have $\gamma\subset \gamma_{s,t}$ for all $(s,t)\in \gamma^*$,
which by duality gives $\gamma^*\subset \gamma_{x,y}^*\cap\gamma_{x',y'}^*$.
In particular, for each $(s,t)\in \gamma^*$ we have $F(x,y,s,t)=F(x',y',s,t)=0$,
implying that $(x,y,x',y',s,t)\in V$. That is, $\gamma\times \gamma\times\gamma^*\subset V$.
Moreover, proceeding with the same pair $(x,y)$, $(x',y')$ 
if $(s,t)\in\gamma^*$ is a nonsingular point of $\gamma^*_{x,y}$ and $\gamma^*_{x',y'}$, 
then both must have the same tangent line as $\gamma^*$ at $(s,t)$.
Then the vectors 
$(F_s(x,y,s,t),F_t(x,y,s,t))$ and $(F_s(x',y',s,t),F_t(x',y',s,t))$ are parallel,
since they are tangent vectors to $\gamma_{x,y}^*$ and $\gamma_{x',y'}^*$ at $(s,t)$, respectively.
Thus we have
\[G(x,y,x',y',s,t) = \det\begin{pmatrix} 
  F_s(x,y,s,t) & F_t(x,y,s,t)\\
 F_s(x',y',s,t) & F_t(x',y',s,t)
\end{pmatrix}=0.\]
If $(s,t)\in \gamma^*$ is a singular point of 
$\gamma^*_{x,y}$ or of $\gamma^*_{x',y'}$, then the corresponding vector $(F_s(x,y,s,t),F_t(x,y,s,t))$ or
 $(F_s(x',y',s,t),F_t(x',y',s,t))$
 is zero, so the determinant above is also zero.
Therefore, $G(x,y,x',y',s,t)=0$ for all $(x,y),(x',y')\in \gamma$ and $(s,t)\in \gamma^*$,
implying that $\gamma\times \gamma\times \gamma^*\subset W$.
Since $\mathcal T$ is finite, $\gamma^*\cap \mathcal T$ is at 
most finite, and thus $\gamma\times\gamma\times\gamma^*$ cannot be contained in $V_0$.
Hence $\gamma\times \gamma\times \gamma^*\subset \widetilde W$.
\end{proof}

Note that $\widetilde W$ is of dimension at least three, since
$$
\{(x,y,x,y,s,t)\in \C^6\mid F(x,y,s,t)=0\}\subset\widetilde W.
$$
Also, since $\widetilde W\subset W\subset V$, and in view of Lemma~\ref{lem:Vdim4},
its dimension is at most four.

We will show that if $\dim \widetilde{W} =3$, {\it for some} permutation $\sigma$ of the coordinates $x,y,s,t$,
then we can use this to control the popular curves 
and the dual popular curves, as asserted in parts (a) and (b) of Proposition~\ref{prop:keyprop}.
On the other hand, if $\dim \widetilde{W}=4$ {\it for every} choice of $\sigma$, 
then we will deduce that $F$ must have a special form, as in Theorem~\ref{thm:main}$(ii)$

\subsection{The case \texorpdfstring{$\dim \widetilde{W} =  3$}{dim W = 3}}

Assume that $\widetilde{W}$ is of dimension three, for some permutation of the coordinates $x,y,s,t$,
which, without loss of generality, we assume to be the permutation that corresponds to the split $\{(x,y),(s,t)\}$. 
We claim that then properties $(a)$ and $(b)$ of Proposition~\ref{prop:keyprop} hold for that permutation.

Let $\gamma$ be a popular curve and let 
$\gamma^*$ be an associated curve of $\gamma$.
Note that $\gamma\times\gamma\times\gamma^*$ is an irreducible three-dimensional 
algebraic variety, since it is a Cartesian product of three irreducible one-dimensional varieties
(see \cite[Appendix A.1]{RSZ15}).
By Lemma~\ref{lem:inW}, 
$\gamma\times\gamma\times\gamma^*$ is contained in $\widetilde{W}$.
In other words, $\gamma\times\gamma\times\gamma^*$ is one of the three-dimensional 
irreducible components of $\widetilde{W}$.
We note that the variety $W$ containing $\widetilde{W}$ is of degree $O(\delta^3)$, since it is defined by the equations $F(x,y,s,t)=0$, $F(x',y',s,t)=0$, $G=0$, 
of respective degrees $\delta$, $\delta$, $2\delta-2$.
Hence, the number of the three-dimensional irreducible components
of $\widetilde{W}$ is $O(\delta^3)$.


Suppose we have a pair of distinct curves $\gamma_1^*$, $\gamma_2^*$, 
with $\gamma_1^*$ associated to a popular curve $\gamma_1$ and $\gamma_2^*$ associated to a popular curve $\gamma_2$
(but $\gamma_1$ and $\gamma_2$ are not necessarily distinct).
Then the products $\gamma_1\times\gamma_1\times\gamma_1^*$ and $\gamma_2\times\gamma_2\times\gamma_2^*$ are distinct.
It follows that the number of distinct associated curves $\gamma^*$ is bounded by $O(\delta^3)$.

In other words, the points $(s,t)\in \C^2\backslash \mathcal{T}$ for which $\gamma_{s,t}$ contains a popular 
curve are contained in the union of at most $O(\delta^3)$ (associated) curves, each of degree at most $\delta$.
We define $\mathcal{T}''$ to be the union of these $O(\delta^3)$ curves, and set 
$\mathcal{T}':=\mathcal{T}\cup \mathcal{T}''$.
Then $\mathcal{T}'$ is a union of a curve of degree $O(\delta^4)$ and at most $\delta^2$ isolated points.
This proves part $(a)$ of Proposition \ref{prop:keyprop}.

Similarly, for any pair of distinct popular curves $\gamma_1$, $\gamma_2$, with respective associated 
curves $\gamma_1^*,\gamma_2^*$ (distinct or not), the products 
$\gamma_1\times\gamma_1\times\gamma_1^*$ and $\gamma_2\times\gamma_2\times\gamma_2^*$ are distinct.
So, as above, the number of popular curves $\gamma$ is bounded by $O(\delta^3)$.

By Lemma~\ref{lem:associated}(b), 
for every popular dual curve $\gamma^*$ there exists an irreducible algebraic curve $\gamma\subset\C^2$ of degree at most $\delta$, 
such that $\gamma^*\subset \gamma_{x,y}^*$ for all $(x,y)\in \gamma$. 
By duality, this implies that $(x,y)\in \gamma_{s,t}$, for every $(s,t)\in \gamma^*$ and every $(x,y)\in\gamma$.
Thus, $\gamma\subset \gamma_{s,t}$ for every $(s,t)\in \gamma^*$. Hence
$\gamma$ is popular, and $\gamma^*$ is an associated curve of $\gamma$.
To recap, we get that every popular dual curve $\gamma^*$ is an 
associated curve of some popular curve $\gamma$.

Hence, points $(x,y)\in \C^2\backslash \mathcal{S}$ for which $\gamma_{x,y}^*$ contains a popular 
dual curve are contained in the union of $O(\delta^3)$ curves, each of degree at most $\delta$.
We define $\mathcal S''$ to be the union of these $O(\delta^3)$ curves, and set 
$\mathcal{S}':=\mathcal{S}\cup \mathcal{S}''$.
Then $\mathcal{S}'$ is a union of a curve of degree at most $O(\delta^4)$ and of at most $\delta^2$ isolated points.
This proves (b), which completes the proof of Proposition~\ref{prop:keyprop} in the case $\dim \widetilde{W} = 3$.

\subsection{The case \texorpdfstring{$\dim \widetilde{W} = 4$}{dim W = 4}}

Assume that $\widetilde{W}$ is four-dimensional for each of the three permutations $\sigma$ of the coordinates $x,y,s,t$.
Fix a permutation $\sigma$;
by relabeling the variables if necessary we can assume that $\sigma$ is the permutation $\{(x,y), (s,t)\}$.

\begin{lemma}\label{lem:Z0}
There exists a two-dimensional subvariety $Z_0\subset Z(F)$ such that the 
following holds. For every $(x,y,s,t)\in Z(F)\backslash Z_0$, there exist $x',y'\in\C$ such that\\
$(a)$ The point $(x,y,x', y', s,t)$ is a regular point of $\widetilde{W}$.\\
$(b)$ None of the partial derivatives of $F$ vanishes at $(x,y,s,t)$ or at $(x',y',s,t)$.
\end{lemma}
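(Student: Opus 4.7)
The plan is to construct $Z_0$ as the Zariski closure in $Z(F)$ of the complement of the image of a well-chosen open subset $U\subset\widetilde W$. Let $\pi,\pi':\widetilde W\to Z(F)$ denote the two coordinate projections $(x,y,x',y',s,t)\mapsto(x,y,s,t)$ and $(x,y,x',y',s,t)\mapsto(x',y',s,t)$; both map into $Z(F)$ by the definition of $V$. Define
\[
U:=\{p\in\widetilde W:\ p\text{ is regular in }\widetilde W,\ F_a(\pi(p))\ne 0\text{ and }F_a(\pi'(p))\ne 0\text{ for every }a\in\{x,y,s,t\}\}.
\]
For any $(x,y,s,t)\in\pi(U)$, any preimage $p\in U$ automatically supplies $(x',y')$ satisfying both $(a)$ and $(b)$, so the task reduces to showing that $Z(F)\setminus\pi(U)$ is contained in a subvariety of dimension at most $2$.

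The first step is to show that every $4$-dimensional irreducible component $W_*$ of $\widetilde W$ (at least one exists since $\dim\widetilde W=4$) has both $\pi|_{W_*}$ and $\pi'|_{W_*}$ dominant onto $Z(F)$. The fiber of $\pi:V\to Z(F)$ over a point $(x,y,s,t)$ with $(s,t)\notin\mathcal T$ is the $1$-dimensional curve $\gamma_{s,t}$. Since $W_*\not\subset V_0=\C^4\times\mathcal T$ by the definition of $\widetilde W$, the generic point of $\pi(W_*)$ has $(s,t)\notin\mathcal T$, and hence the generic fiber of $\pi|_{W_*}$ has dimension at most~$1$. The fiber-dimension theorem then yields $\dim\pi(W_*)\ge 4-1=3$; since $Z(F)$ is irreducible of dimension $3$, this forces dominance. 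The argument for $\pi'$ is identical.

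Next I would verify that $U\cap W_*$ is open dense in $W_*$, hence $4$-dimensional. The singular locus of $\widetilde W$ meets $W_*$ in a proper closed subvariety of dimension at most $3$. For each $a\in\{x,y,s,t\}$, the function $F_a\circ\pi$ does not vanish identically on $W_*$: otherwise $F_a$ would vanish on the Zariski-dense image $\pi(W_*)\subset Z(F)$, hence on all of $Z(F)$; by irreducibility of $F$ and $\deg F_a<\deg F$ this would force $F_a\equiv 0$, contradicting the hypothesis. The same reasoning using $\pi'$ handles $F_a\circ\pi'$, so $U\cap W_*$ is the complement in $W_*$ of a proper closed subvariety.

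To conclude, because $\pi|_{W_*}$ is dominant and $U\cap W_*$ is dense open in $W_*$, the image $\pi(U\cap W_*)$ is constructible and Zariski-dense in the irreducible variety $Z(F)$; its complement in $Z(F)$ is therefore contained in a proper closed subvariety of dimension at most~$2$, which we take as $Z_0$. The main obstacle is the dominance claim in the first step, which rests on the fibration structure of $V$ over $\C^2_{(s,t)}$ and on $\widetilde W$ avoiding $V_0$; once that is in place, the rest is routine dimension counting together with the basic fact that a dense constructible subset of an irreducible variety omits only a lower-dimensional subvariety.
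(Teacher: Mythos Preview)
Your proof is correct and follows essentially the same route as the paper's: define the open set of ``good'' points in $\widetilde W$, show its image under $\pi$ is Zariski-dense in $Z(F)$ via the fiber-dimension theorem (exploiting that fibers over points with $(s,t)\notin\mathcal T$ are at most one-dimensional, and that $\widetilde W$ avoids $V_0$), and take $Z_0$ to contain the complement. The only notable difference is organizational: you prove dominance of $\pi|_{W_*}$ directly and then use it to justify that the conditions $F_a\circ\pi\ne 0$, $F_a\circ\pi'\ne 0$ cut out proper subvarieties of $W_*$, whereas the paper asserts the latter as ``not hard to verify'' and then rules out $\dim\cl(\pi(W'))\le 2$ by contradiction---your ordering makes the argument a bit more self-contained.
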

\begin{proof}
Let $W'$ be the set of regular points $(x,y,x',y',s,t)$ of $\widetilde{W}$
that are contained in some irreducible four-dimensional component of $\widetilde{W}$, 
and such that none of the partial derivatives of $F$ vanishes at $(x,y,s,t)$
or at $(x',y',s,t)$.
Since the complement of each of these properties (being singular or having a vanishing derivative)
defines a lower-dimensional subvariety of $\widetilde W$ (as is not hard to verify),
the Zariski closure of $W'$ is four-dimensional. 

Consider the projection $\pi:(x,y,x',y',s,t)\mapsto (x,y,s,t)$.
We have $\pi(W')\subset Z(F)$.
If the Zariski closure of $\pi(W')$ is three-dimensional, then it must be equal to $Z(F)$. 
In this case, there is a two-dimensional subvariety $Z_0\subset Z(F)$ (which might
also be lower-dimensional or empty), such that 
$Z(F)\backslash \pi(W')\subset Z_0$, and we are done.

Suppose then that the Zariski closure of $\pi(W')$ is a subvariety of $Z(F)$ of dimension at most two.
We claim that there exists a point $(x_0,y_0,s_0,t_0)\in \pi(W')$ for which the fiber 
$\pi^{-1}(x_0,y_0,s_0,t_0)\cap \cl(W')$ is at most one-dimensional.
Indeed, by our construction of $\widetilde{W}$ (and since $\cl(W')$ is a four-dimensional irreducible component
of $\widetilde{W}$), the set $W'':=\cl(W')\setminus V_0$ is a 
Zariski-dense open subset of $\cl(W')$,
and $(s,t)\in \C^2\setminus \mathcal T$ for every $(x,y,x',y',s,t)\in \cl(W')$. 
Since $W'$ is also Zariski-dense and open in $\cl (W')$, it means that there exists a point
$(x_0,y_0,x_0',y_0',s_0,t_0)\in W''\cap W'$.
In particular, there exists
$(x_0,y_0,s_0,t_0)\in \pi(W')$ with $(s_0,t_0)\not\in \mathcal T$.
Noting that 
$$
\pi^{-1}(x_0,y_0,s_0,t_0)\cap \widetilde{W}
\subset
 \{(x_0, y_0,x',y',s_0,t_0)\mid F(x',y',s_0,t_0)=0\},
$$
the fiber $\pi^{-1}(x_0,y_0,s_0,t_0)$ is indeed at most one-dimensional. 
But, by \cite[Theorem 11.12]{Ha92}, 
$$
\dim\cl(W')\le  \dim \cl(\pi(W'))+\dim (\pi^{-1}(x_0,y_0,s_0,t_0)\cap \cl(W')),
$$
which implies $\dim\cl(W')\le 3$. This yields a contradiction (to $\pi(W')$ being at most
two-dimensional), and hence completes the proof.
\end{proof}

Recall that in this subsection we assume that $\widetilde{W}$ is four-dimensional 
for each of the three permutations $\sigma$ of the coordinates $x,y,s,t$.
In what follows we will make use of all three permutations, 
so let $\sigma_1$, $\sigma_2$, $\sigma_3$ denote the permutations $\{(x,y),(s,t)\}$, $\{(t,y),(s,x)\}$, $\{(s,y),(x,t)\}$, respectively,
and let us denote the variety $\widetilde{W}$ that corresponds to a permutation $\sigma$ by $\widetilde{W}_\sigma$.

For each $i=1,2,3$, let $Z_{\sigma_i}$ denote the excluded subvariety of dimension at most two, given by applying Lemma~\ref{lem:Z0} to the variety $\widetilde{W}_{\sigma_i}$, and,
by a slight abuse of notation, put $Z_0:=Z_{\sigma_1}\cup Z_{\sigma_2}\cup Z_{\sigma_3}$. 
Evidently, $Z_0\subset Z(F)$ is at most two-dimensional.

Fix $(x_0,y_0,s_0,t_0)\in Z(F)\backslash Z_0$.
Let $U\subset Z(F)\backslash Z_0$ be an open neighborhood of $(x_0,y_0,s_0,t_0)$.
By Lemma~\ref{lem:Z0}, there exist $x_0',y_0'$ such that 
$(x_0,y_0,x_0',y_0',s_0,t_0)$ is a regular point of $\widetilde{W}_{\sigma_1}$
and the partial derivatives of $F$ do not vanish at $(x_0,y_0,s_0,t_0)$ and at $(x_0',y_0',s_0,t_0)$.
In particular, there exists a neighborhood $U_1$ of $(x_0,y_0,x_0',y_0',s_0,t_0)$ in $\widetilde{W}_{\sigma_1}$,
such that every point $(x,y,x',y',s,t)\in U_1$ is a regular point of $\widetilde{W}_{\sigma_1}$,
and satisfies
\begin{align}
F_s(x,y,s,t)F_t(x',y',s,t)&= F_s(x',y',s,t)F_t(x,y,s,t),\nonumber\\
F(x,y,s,t)&=0,\label{Weq}\\
F(x',y',s,t)&=0,\nonumber
\end{align}
and the partial derivatives of $F$ do not vanish at $(x,y,s,t)$ and at $(x',y',s,t)$.
In particular, locally, over $U_1$, the varieties $\widetilde{W}_{\sigma_1}$ and $V$ coincide.

We apply the implicit function theorem to the last two equations in \eqref{Weq} to write
$y=y(x,s,t)$, $y'=y'(x',s,t)$ for  $(x,y,x',y',s,t)\in U_1$. 
(For this we use the fact that (a) $F_y(x,y,s,t)$ and $F_y(x',y',s,t)$ do not
vanish at any $(x,y,x',y',s,t)\in U_1$, and (b) $\widetilde{W}_{\sigma_1}\equiv V$ over $U_1$.)
Then (with a suitable reshuffling of the coordinates) $U_1$ is the graph of the function $(x,x',s,t)\mapsto (y(x,s,t),y'(x',s,t))$, 
over the open (i.e., four-dimensional) domain $\tau(U_1)\subset \C^4$,
where $\tau$ is the projection $(x,y,x',y',s,t)\mapsto (x,x',s,t)$
(the non-vanishing of the partial derivatives is easily seen to imply that
$\tau(U_1)$ is indeed four-dimensional).

The first equation in \eqref{Weq} gives (note that the denominators do not vanish)
$$
\frac{F_s(x,y(x,s,t),s,t)}
{F_t(x,y(x,s,t),s,t)}=\frac{F_s(x',y'(x',s,t),s,t)}{F_t(x',y'(x',s,t),s,t)},
$$
for each $(x,x',s,t)\in \tau(U_1)$.
It follows that both sides of the equation are independent of $x,x'$
(varying $x$ on the left-hand
side does not change the right-hand side, and vice versa).
Thus we can write 
$$
\frac{F_s(x,y(x,s,t),s,t)}
{F_t(x,y(x,s,t),s,t)},
$$
as a function $h_1(s,t)$ (independent of $x$),
which can also be rewritten as
$$
h_1(s,t)=\left(
-\frac{F_s(x,y(x,s,t),s,t)}{F_y(x,y(x,s,t),s,t)}\right) \Big{/} \left(
-\frac{F_t(x,y(x,s,t),s,t)}{F_y(x,y(x,s,t),s,t)}\right),
$$
or, using implicit differentiation and writing partial derivatives of $y=y(x,s,t)$ in the form $y_x,y_s,y_t$,
\begin{equation}\label{eq:h1}
h_1(s,t) = \frac{y_s(x,s,t)}{y_t(x,s,t)}.
\end{equation}

As above, let $\pi:\C^6\to \C^4$ denote the projection $(x,y,x',y',s,t)\mapsto (x,y,s,t)$.
By shrinking $U$ or $U_1$, if needed, we may assume that $\pi(U_1)=U$.
So $U$ is an open neighborhood of $(x_0,y_0,s_0,t_0)$ in $Z(F)$
and forms the graph of the function $y=y(x,s,t)$ (again, with a suitable reshuffling of the coordinates) over the open domain $\rho(U)\subset \C^3$,
where $\rho:\C^4\to \C^3$ denotes the projection $(x,y,s,t)\mapsto (x,s,t)$.
Then the function $h_1$, as represented in \eqref{eq:h1}, is defined over the domain $\rho(U)$ and is independent of the variable $x$.

We can do the same for the permutations $(t,y, s,x)$ and $(s,y, t,x)$ (although we permute the roles 
of the variables, we will keep listing them in the same order as the variables of $F$, to limit the confusion).
In appropriate neighborhoods $U_2\subset \widetilde{W}_{\sigma_2}$ and $U_3\subset\widetilde{W}_{\sigma_3}$
we get
\begin{align*}
F_s(x,y,s,t)F_x(x,y',s,t')&=F_s(x,y',s,t')F_x(x,y,s,t),\\
F_x(x,y,s,t)F_t(x,y',s',t)&=F_x(x,y',s',t)F_t(x,y,s,t),
\end{align*}
so, by shrinking $U_2$ and $U_3$, if needed, we may assume that $\pi(U_2)=\pi(U_3)=U$, and conclude that we can write
\begin{equation}\label{eq:h2}
h_2(s,x) = \frac{y_s(x,s,t)}{y_x(x,s,t)}
\end{equation}
for $(x,s,t)\in\rho(U)$, 
and similarly, 
we can write
\begin{equation}\label{eq:h3}
h_3(t,x) = \frac{y_t(x,s,t)}{y_x(x,s,t)}
\end{equation}
for $(x,s,t)\in\rho(U)$.

In what follows, let us restrict ourselves to the open set $\rho(U)\subset\C^3$.
From \eqref{eq:h1}, \eqref{eq:h2}, and \eqref{eq:h3} we have 
\begin{equation}\label{eq:h2h1h3}
h_2(s,x)=h_1(s,t)h_3(t,x),
\end{equation}
 so we see that
$$
\frac{y_s(x,s,t)}{y_x(x,s,t)} = h_2(s,x) = h_1(s,t)h_3(t,x)
$$
is independent of $t$.
Thus, we can substitute any value of $t$ that occurs in $\rho(U)$, say $t=t_0$, and get
\begin{equation}\label{eq:qp}
\frac{y_s(x,s,t)}{y_x(x,s,t)}
=\frac{h_1(s,t_0)}{1/h_3(t_0,x)}
= \frac{q'(s)}{p'(x)},
\end{equation}
where $p(x):=\int \frac{1}{h_3(t_0,x)}dx$ and $q(s):=\int h_1(s,t_0)ds$ 
(the arbitrary constants in these indefinite integrals clearly do not matter).

In a similar manner,  we see that
$$
\frac{y_s(x,s,t)}{y_t(x,s,t)} = h_1(s,t)=\frac{h_2(s,x)}{h_3(t,x)}
$$
is independent of $x$, so substituting $x=x_0$, say, we get
$$
\frac{y_s(x,s,t)}{y_t(x,s,t)}=\frac{\widehat{q}'(s)}{r'(t)},
$$
where $\widehat{q}(s):=\int h_2(s,x_0)ds$ and $r(t):=\int h_3(t,x_0)dt$.
However, by \eqref{eq:h2h1h3}, we have $h_2(s,x_0)=h_1(s,t_0)h_3(t_0,x_0)$,
so
$$
\widehat{q}(s):=\int h_2(s,x_0)ds= h_3(t_0,x_0)\int h_1(s,t_0)ds= h_3(t_0,x_0)q(s)
$$
(up to an additive constant, which we may assume to be zero).
Therefore, we can redefine $r(t):=h_3(t_0,x_0)\int h_3(t,x_0)dt$, and get
\begin{equation}\label{eq:qr}
\frac{y_s(x,s,t)}{y_t(x,s,t)}=\frac{q'(s)}{r'(t)}.
\end{equation}

Combining \eqref{eq:qp} and \eqref{eq:qr}, we get
\begin{equation}\label{eq:diffeq}
\frac{y_x(x,s,t)}{p'(x)}\equiv\frac{y_s(x,s,t)}{q'(s)}\equiv \frac{y_t(x,s,t)}{r'(t)},
\end{equation}
for all $x,s,t$ such that $(x,y(x,s,t),s,t)$ is in the neighborhood $U$.

We change variables to $u=p(x), v=q(s), w=r(t)$, so that the equations in \eqref{eq:diffeq} become
$$
y_u(u,v,w)\equiv y_v(u,v,w)\equiv y_w(u,v,w).
$$
We perform a second change of variables 
\[u'=u+v+w, ~~~v'=v,~~~w'=w;\]
the inverse change of variables is $u=u'-v'-w', v=v', w=w'$.
Then we have 
$$
y_{v'}=y_u\frac{\partial u}{\partial v'}+y_v\frac{\partial v}{\partial v'}+y_w\frac{\partial w}{\partial v'}
=-y_u+y_v\equiv 0,
$$
and 
$$
y_{w'}=y_u\frac{\partial u}{\partial w'}+y_v\frac{\partial v}{\partial w'}+y_w\frac{\partial w}{\partial w'}
=-y_u+y_w\equiv 0.
$$
In other words, $y$ is independent of $v',w'$, so it depends only on $u'=u+v+w=p(x)+q(s)+r(t)$,
and thus has the form 
$$
y(x,s,t)=h(p(x)+q(s)+r(t)).
$$
By shrinking the neighborhood $U$ further if necessary, we can assume that $h$ is invertible
over the set $\{p(x)+q(s)+r(t)\mid (x,s,t)\in\rho(U)\}$, 
and we can do it since the derivative of $h$ at $p(x_0)+q(s_0)+r(t_0)$ is nonzero.
Hence, for all $(x,y,s,t)\in U$ we have
$$
p(x)-h^{-1}(y)+q(s)+r(t)=0.
$$
This is the form in property $(ii)$ of Theorem \ref{thm:main},
so Proposition \ref{prop:keyprop} is proved also in the case $\dim \widetilde{W} = 4$.
\qed

\section{Curves with many coplanar quadruples}\label{sec:app}

We now give the proof of Theorem \ref{thm:coplanarintro}, 
which we restate below. 
The proof is based on the following structural result, 
proved by the authors \cite[Theorem 6.1]{RSZ15}.
Recall that a tuple of points is \emph{proper} if no two of the points coincide.

\begin{theorem}[Raz, Sharir, and De Zeeuw~\cite{RSZ15}]\label{thm:threecurves}
Let $C_1,C_2,C_3$ be three (not necessarily distinct) irreducible algebraic curves of degree at most $d$ in $\C^2$,
and let $S_1\subset C_1, S_2\subset C_2,S_3\subset C_3$ be finite subsets of size $n$.
Then the number of proper collinear triples in $S_1\times S_2\times S_3$ is $O\left(n^{11/6}\right)$,
where the constant of proportionality depends on $d$,
unless $C_1\cup C_2 \cup C_3$ is a line or a cubic curve.
\end{theorem}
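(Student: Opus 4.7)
The plan is to reduce the collinear-triples count to the three-variable Elekes-Szab\'o bound~\eqref{eq:ES3D}, and then to upgrade its exceptional case to the line-or-cubic classification.

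After a generic linear change of coordinates in $\C^2$, I may assume that the projection $\pi(x,y)=x$ restricts to a finite map of degree at most $d$ on each $C_i$, and that no $C_i$ is a vertical line. Put $A_i:=\pi(S_i)$; then $|A_i|=\Omega(n)$. Eliminating $y_1,y_2,y_3$ from the system $f_i(x_i,y_i)=0$ for $i=1,2,3$ together with the collinearity determinant condition
\[
\det\!\begin{pmatrix} x_1 & y_1 & 1\\ x_2 & y_2 & 1\\ x_3 & y_3 & 1 \end{pmatrix}=0,
\]
via resultants yields a polynomial $F\in\C[x_1,x_2,x_3]$ of degree $O_d(1)$ whose zero set contains $(\pi(p_1),\pi(p_2),\pi(p_3))$ for every collinear triple $(p_1,p_2,p_3)\in C_1\times C_2\times C_3$. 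Since the fibers of each $\pi|_{C_i}$ have size at most $d$, the number of proper collinear triples in $S_1\times S_2\times S_3$ is at most $d^3\cdot|Z(F)\cap(A_1\times A_2\times A_3)|$.

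I would then decompose $F$ into irreducible factors and apply~\eqref{eq:ES3D} to each factor whose three partial derivatives are all nontrivial. Factors in which some partial derivative vanishes identically depend on only two of the three variables and correspond to degenerate collinearity configurations (for instance, a common line contained in one of the $C_k$), which are excluded by the generic coordinate choice and the irreducibility of the $C_i$; such factors contribute only $O(n^2)$ and are absorbed into the main bound. Hence, unless some irreducible factor $F_j$ of $F$ falls into the exceptional case of~\eqref{eq:ES3D}, the total number of collinear triples is $O(n^{11/6})$, as desired.

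The main obstacle is the exceptional case: locally, $F_j(x_1,x_2,x_3)=0$ is equivalent to an additive equation $\varphi_1(x_1)+\varphi_2(x_2)+\varphi_3(x_3)=0$ with the $\varphi_i$ one-to-one and analytic. I would pull this additive structure back through $\pi$ and the local algebraic branches $y_i=y_i(x_i)$ to obtain, on an open subset of each $C_i$, an analytic parameterization $\psi_i$ under which projective collinearity of a triple of points from $C_1\times C_2\times C_3$ becomes $\psi_1(p_1)+\psi_2(p_2)+\psi_3(p_3)=0$. Analytic continuation, combined with a standard algebraization step (the chord-and-tangent construction is algebraic, so the local analytic group law must extend to an algebraic one on the normalization of each $C_i$), promotes each $\psi_i$ to a parameterization of $C_i$ by a one-dimensional commutative algebraic group, compatible with projective collinearity in $\mathbb{P}^2$. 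Finally, I would invoke the classical fact that an irreducible plane curve which carries such a collinearity-respecting group law must be a line or an irreducible cubic; matching the three group structures on $C_1,C_2,C_3$ across their common collinear triples then forces the union $C_1\cup C_2\cup C_3$ to lie in a common cubic, or to be a single line. This algebraization-and-classification step, which transfers the local analytic information about $F_j$ back to the global geometry of the three curves, is the technical heart of the argument.
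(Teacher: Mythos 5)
The paper does not prove this statement at all: it is imported verbatim as \cite[Theorem 6.1]{RSZ15}, so there is no internal proof to compare against. Your outline does follow the same strategy as the proof in that reference (project to $x$-coordinates, eliminate to get a trivariate $F$, apply the $O(n^{11/6})$ Elekes--Szab\'o bound, and in the exceptional case algebraize the additive structure to conclude line-or-cubic), but as a proof it has two genuine gaps.

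First, your treatment of the factors of $F$ that depend on only two of the variables is wrong as stated: such a factor $F_j(x_1,x_2)$ can contribute $\Theta(n)\cdot n=\Theta(n^2)$ points of $Z(F)\cap(A_1\times A_2\times A_3)$, and $n^2$ is \emph{not} absorbed into $O(n^{11/6})$. These factors really do occur (for instance, when $C_1=C_2$ the incidence variety has the diagonal component $p_1=p_2$, whose image contributes the factor $x_1-x_2$; resultants also routinely produce extraneous factors). What you must show is that no \emph{proper} collinear triple projects into $Z(F_j)$ for such a factor, outside the excluded configurations. This amounts to analyzing when a two-dimensional component of the incidence correspondence $\{(p_1,p_2,p_3)\in C_1\times C_2\times C_3:\text{collinear}\}$ projects onto a cylinder $\{g(x_1,x_2)=0\}\times\C$: that forces a one-parameter family of pairs $(p_1,p_2)$ with $\overline{p_1p_2}\subset C_3$, hence $C_1=C_2=C_3$ a line (exceptional) or the diagonal case (improper triples, killed by the generic rotation). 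You gesture at this but then fall back on the incorrect $O(n^2)$ claim. Second, the entire exceptional case rests on an unproved assertion: that a local analytic parameterization under which collinearity becomes additive extends to an algebraic group law compatible with collinearity, and that this forces each curve to be a line or a component of a cubic. You correctly identify this as the technical heart, but invoking it as a ``classical fact'' is a placeholder rather than an argument; this step (essentially a converse-of-Abel / group-law-on-cubics statement, together with the matching of the structures across $C_1,C_2,C_3$ so that the \emph{union} has degree one or three) is precisely where \cite{RSZ15} does the real work, and it needs either a precise citation or a proof.
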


\noindent{\bf Theorem~\ref{thm:coplanarintro}.}
{\it Let $C$ be an algebraic curve of degree $d$ in $\C^3$, and let $S\subset C$ be a finite set of size $n$. 
Then the number of proper coplanar quadruples from $S$ 
is
$O(n^{8/3})$, 
where the constant of proportionality depends on $d$,
unless $C$ contains either a curve that is contained in a plane, or a curve of degree four.
}
\begin{proof}
We define a polynomial $H$ of the 12 variables $x_i,y_i,z_i$, $i=1,2,3,4$, by
\[H(x_1,\ldots,z_4)
= \left|\begin{matrix} 1&x_1&y_1&z_1\\1&x_2&y_2&z_2\\1&x_3&y_3&z_3\\1&x_4&y_4&z_4
\end{matrix}\right|, \]
and a variety $X\subset \C^{12}$, with coordinates $(x_1,y_1,z_1,x_2,y_2,z_2,x_3,y_3,z_3,x_4,y_4,z_4)$, by
\[X:= (C\times C\times C\times C)\cap Z(H), \]
which is the set of all (proper or improper) coplanar quadruples in $C\times C\times C\times C$.
Note that every irreducible component of $C\times C\times C\times C$ is a product of four irreducible components of $C$, and every such product is an irreducible four-dimensional variety.
Therefore, every component of $X$ is three-dimensional, 
unless $H$ vanishes on a component of $C\times C\times C\times C$ (see, e.g., \cite[Lemma A.1]{RSZ15}),
which is a product $C_1\times C_2\times C_3\times C_4$ of four 
(not necessarily distinct) irreducible components $C_1,C_2,C_3,C_4$ of $C$.
If $H$ vanishes on $C_1\times C_2\times C_3\times C_4$, 
then every quadruple from $C_1\times C_2\times C_3\times C_4$ is coplanar, 
which implies that $C$ contains a planar curve, in which case we are done.
We may therefore assume that every component of $X$ is three-dimensional.

By applying a generic rotation in $\C^3$ at the start of the proof, we can assume that $(a)$ no 
two points of $S$ have the same $x$-coordinate, and $(b)$ every $x\in\C$ is the $x$-coordinate 
of at most finitely many points on $C$.
Then the projection $\pi:\C^{12}\to\C^4$ defined by 
\[\pi(x_1,y_1,z_1,x_2,y_2,z_2,x_3,y_3,z_3,x_4,y_4,z_4)= (x_1,x_2,x_3,x_4)\]
is injective on the Cartesian product $S\times S\times S\times S$, and
the image of $S\times S\times S\times S$ is a Cartesian product $A\times A\times A\times A$, 
where $A\subset \C$  is the set of the $x$-coordinates of the points of $S$, and is of size $n$.

Every irreducible component of the Zariski closure $\cl(\pi(X))$ of $\pi(X)$ is a three-dimensional variety.
Indeed, $\cl(\pi(X))$ is at most three-dimensional since $X$ is three-dimensional,
and every component of $\cl(\pi(X))$ is at least three-dimensional,
since $\pi$ has finite preimages in $X$ and every component of $X$ is three-dimensional.
More precisely, 
due to property $(b)$ of the generic rotation,
every point $(x_1,x_2,x_3,x_4)\in \C^4$ has finite preimage $\pi^{-1}(x_1,x_2,x_3,x_4)\cap X$, which by \cite[Theorem 11.12]{Ha92}
implies that every irreducible component of $\cl(\pi(X))$ has the same dimension as the corresponding component of $X$.

The variety $X$ contains all coplanar quadruples in $C\times C\times C\times C$, including all improper quadruples. These are mapped onto the union of the six hyperplanes $x_1=x_2$, $x_1=x_3$, 
$x_1=x_4$, $x_2=x_3$, $x_2=x_4$, and $x_3=x_4$ in $\C^4$.
We remove these hyperplanes from $\cl(\pi(X))$, and we denote the Zariski closure of the remainder by $Y$.
If we write $M$ for the number of proper coplanar quadruples in $S\times S\times S\times S$,
then these $M$ quadruples are mapped to $M$ points in the intersection of $Y$ with the Cartesian product $A\times A\times A\times A$.

Thus we can apply Theorem \ref{thm:main} to each irreducible 
component of $Y$ to bound $M$,
noting that each such component, being an irreducible three-dimensional variety in 
$\C^4$, is the zero set of some irreducible polynomial in four variables
(whose degree depends on $d$). This gives the bound 
in the statement of Theorem \ref{thm:coplanarintro}, 
unless condition $(ii)$ of Theorem \ref{thm:main} holds 
on some irreducible component of $Y$.
Suppose $Y'$ is such a component, so condition $(ii)$ 
gives, for $i=1,2,3,4$, a point $t_i\in\C$,
a neighborhood $D_i$ of $t_i$, and a one-to-one 
analytic map $\phi_i:D_i\to \C$, such that $(t_1,t_2,t_3,t_4)\in Y'$, and 
for each $(x,y,z,w)\in D_1\times D_2\times D_3\times D_4$, 
$(x,y,z,w)\in Y'$ 
if and only if $\phi_1(x)+\phi_2(y)+\phi_3(z)+\phi_4(w)=0$.
By shrinking the $D_i$'s as needed, we can assume that 
$D_1\times D_2\times D_3\times D_4$ does not contain any 
points that were added to 
$\pi(X)$ when taking the closure,
nor does it meet any of the six excluded hyperplanes $x_i=x_j$.

Write $\pi_i$ for the projection $(x_i,y_i,z_i)\mapsto x_i$, for $i=1,2,3,4$.
We choose $p_i\in C$ so that $\pi_i(p_i) = t_i$ (by construction, such points exist).
We also pick an open neighborhood 
$U_i$ of $p_i$ in $C$ so that $\pi_i(U_i)\subset D_i$, 
and we define the analytic map $\varphi_i := \phi_i\circ \pi_i:U_i\to \C$.
By shifting the quadruple $(p_1,p_2,p_3,p_4)$ slightly 
within $U_1\times U_2\times U_3\times U_4$, 
so that each point $p_i$ is shifted independently,
we can assume 
that it is proper, since improper quadruples lie in a lower-dimensional subset of $Y'$.
By shrinking each $U_i$, if needed, we can assume that the corresponding map 
$\varphi_i$ is one-to-one.
We can also assume that each neighborhood 
$U_i$ lies within one irreducible component $C_i\subset C$, and that 
$U_i\cap U_j=\emptyset$ for $i\neq j$ 
(note that the curves $C_i$ need not be distinct, but this property can still be made
to work since the quadruple is proper).
Finally, we can assume that each $U_i$ contains no singular point of $C$, 
since the
set of singular points of $C$ is discrete.
Altogether, 
we have four points
$p_i\in U_i\subset C_i\subset C$, for $i=1,2,3,4$,
satisfying $\varphi_1(p_1)+\varphi_2(p_2)+\varphi_3(p_3)+\varphi_4(p_4) = 0$, 
and a quadruple $(q_1,q_2,q_3,q_4)\in U_1\times U_2\times U_3\times U_4$ 
is coplanar if and only if $\varphi_1(q_1)+\varphi_2(q_2)+\varphi_3(q_3)+\varphi_4(q_4) = 0$.

By continuity, we can choose three sequences $(q_1^{j})_{j=0}^n$, 
$(q_2^{k})_{k=0}^n$, $(q_3^{\ell})_{\ell=0}^{2n}$, such that $q_1^0=p_1$,
$q_2^0=p_2$, $q_3^0=p_3$, $q_1^j\in U_1$ for every $j$, $q_2^k\in U_2$ for every $k$,
$q_3^\ell\in U_3$ for every $\ell$, and  the three sequences $(\varphi_1(q_1^j))_j$,
$(\varphi_2(q_2^k))_k$, $(\varphi_3(q_3^\ell))_\ell$ are arithmetic progressions, where the 
first two have the same common difference $\delta$, and the third has difference $-\delta$.
We therefore have, for each $j,k=0,\ldots,n$, 
\begin{align*}
\varphi_1(q_1^j)+\varphi_2(q_2^k)+\varphi_3(q_3^{j+k})+\varphi_4(p_4) &=\\
(\varphi_1(p_1)+j\delta)+(\varphi_2(p_2)+k\delta)
&+(\varphi_3(p_3)-(j+k)\delta)+\varphi_4(p_4)=0.
\end{align*}
%
We conclude that the quadruple $(q_1^j,q_2^k,q_3^\ell,p_4)$ is coplanar 
for every triple of indices $(j,k,\ell)$ satisfying $\ell=j+k$. That is,
we have found $\Theta(n^2)$ triples $(q_1^j,q_2^k,q_3^{j+k})$
that span a plane in $\C^3$ that goes through the point $p_4$.

Let $\rho$ denote the central projection from the point $p_4$ onto some generic plane $P$ in $\C^3$.
Then $\rho$ maps every triple 
$(q_1^j,q_2^k,q_3^{j+k})$ to a proper 
collinear triple from $\rho(C_1)\times \rho(C_2)\times \rho(C_3)\subset P^3$,
resulting in $\Theta(n^2)$ proper collinear triples.
Applying Theorem~\ref{thm:threecurves}, 
we get that $\rho(C_1)\cup \rho(C_2)\cup \rho(C_3)$ is a line or a cubic curve.

Assume first that $\rho(C_1)\cup \rho(C_2)\cup \rho(C_3)$ is a line.
Then, for each $i=1,2,3$,
$\rho(C_i)$ is a line, 
so $C_i$ is contained in the preimage under $\rho$ of a line.
In other words, each $C_i$ is a planar curve, so in this case we are done.


Next, assume that $\rho(C_1)\cup \rho(C_2)\cup \rho(C_3)$ is a cubic curve.
If two of the curves $\rho(C_1), \rho(C_2), \rho(C_3)$ are distinct, then one of the
three curves must be a line (since the total degree is three).
Then $C$ would contain a planar curve, which again completes the proof. 
Thus we may assume that $\rho(C_1)= \rho(C_2)=\rho(C_3)$.
Moreover, we can repeat this for any $p_4$ within a small neighborhood, 
which implies $C_1=C_2=C_3$.
By repeating the argument with $p_3$ in the role of $p_4$, we can conclude that $C_1=C_2=C_3=C_4$.
Let $C'$ be this curve and note that $\rho(C')$ has degree three.

By \cite[Example 18.16]{Ha92},
projecting a curve from one of its regular points to a generic plane gives a curve whose degree is smaller by one. 
Since $p_4$ was assumed to be a regular point of $C'$,
the fact that $\rho(C')$ has degree three implies that $C'$ has degree four.
\end{proof}

\paragraph{Constructions.}
Planar curves are trivial exceptions to the bound in Theorem \ref{thm:coplanarintro},
but quartic curves are more interesting.
By \cite[Exercise IV.3.6]{Ha77},
a quartic in $\C^3$ is either planar, an intersection of two quadric surfaces, or a rational quartic (a non-planar curve of degree four that can be parameterized by rational functions).
We show that in the second case, there are configurations of $n$ points on the curve that span $\Theta(n^3)$ proper coplanar quadruples.
For the third case, when the curve is a rational quartic, we do not know whether such configurations are possible, and we leave this as an open question.

Let $C$ be a nonsingular and irreducible curve in projective space that is an intersection of quadric surfaces.
The key fact is that such a curve has genus one, i.e., it is an elliptic curve.
This implies that the curve has a group structure,
and it turns out that this group structure is related to coplanarity\footnote{The 
third author would like to thank Mehdi Makhul and Josef Schicho for making him aware of this fact.}.
See for instance \cite[Exercise 3.10]{S09} or \cite[Section 0.8]{H04}.
Specifically, 
if $C$ is a nonsingular irreducible intersection of quadrics in projective space,
there is a group operation $\oplus$ on $C$, and an identity element $\mathcal{O}\in C$ (which may or may not lie at infinity), 
such that four points $p,q,r,s\in C$ are coplanar if and only if $p\oplus q\oplus r\oplus s = \mathcal{O}$.
This even holds when counting with multiplicity. 
For example, if a plane intersects $C$ in $p$ with multiplicity two and in $q,r$ with multiplicity one, 
then $p\oplus p\oplus q\oplus r = \mathcal{O}$, and vice versa.

Because $C$ is an elliptic curve, the group on $C$ is isomorphic to a complex torus $\C/\Lambda$ for a lattice $\Lambda$ (\cite[Corollary VI.5.1.1]{S09}), so in particular it has finite subgroups of any size.
For any $n$, let $H$ be a subgroup of $C$ of size $n$ in the group structure above. 
For any three distinct points $p,q,r\in H$, 
the plane spanned by $p,q,r$ intersects $C$ at the unique point $s$ that satisfies $p\oplus q\oplus r\oplus s = \mathcal{O}$,
and, since $H$ is a subgroup, this equation implies $s\in H$.
Thus $p,q,r,s$ is a coplanar quadruple, and it is proper unless $s$ equals one of $p,q,r$.
The number of improper quadruples of the form $(p',p',q',r')$ that are coplanar (on a plane that intersects $C$ with multiplicity two at $p'$)
equals the number of solutions of $p'\oplus p'\oplus q'\oplus r' = \mathcal{O}$,
which is $O(n^2)$.
Therefore, the point set $H$ spans $\Theta(n^3)$ proper coplanar quadruples.

Given a nonsingular irreducible intersection of quadrics in \emph{affine} space, 
we can take its projective closure and apply the above to get a finite subset $H$ with $\Theta(n^3)$ proper coplanar quadruples.
If any points of $H$ lie at infinity,
then we remove them,
which gives a construction in affine space with asymptotically the same number of coplanar quadruples.

Similar constructions are possible on singular irreducible curves that are intersections of quadrics,
but we omit the details.
We refer to Muntingh \cite[Section 4.4.6]{M10} for a classification of all intersections of quadrics, 
and a careful analysis of group-like structures on them  (the purpose of the constructions there is different from ours, but the sets obtained there also span $\Theta(n^3)$ coplanar quadruples).
On reducible curves, we can easily construct sets of $n$ points with $\Omega(n^4)$ coplanar quadruples.
Indeed, a reducible quartic must contain a line or a conic, 
which lies on a plane, 
so we can choose any $n$ points on that line or conic.


\paragraph{Four-point circles.} Finally, we give one corollary of Theorem \ref{thm:coplanarintro}.
It is related to the orchard problem for circles, which asks for the maximum number of four-point 
circles (circles incident to at least four points) determined by $n$ points in $\R^2$.
Lin et al. \cite{LMNSSZ16} proved that this maximum is $\frac{1}{24}n^3 -O(n^2)$, and 
that any point set attaining that bound must be contained in a quartic curve.
We obtain the following characterization, under the restriction that the point set under consideration 
lies on some constant-degree algebraic curve.
See \cite[Section 4]{LMNSSZ16} for constructions with $\Theta(n^3)$ 
four-point circles on certain curves of degree two, three, and four. 

\begin{corollary}\label{cor:circles}
Let $C$ be an algebraic curve in $\C^2$ of degree $d$,
and let $S\subset C$ be a finite set of size $n$.
Then the number of four-point circles spanned by $S$ is $O(n^{8/3})$, 
where the constant of proportionality depends on $d$,
unless $C$ contains a curve of degree at most four.
\end{corollary}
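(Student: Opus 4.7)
The plan is to use the classical paraboloid lifting $\phi: \C^2 \to \C^3$ defined by $\phi(x,y) := (x, y, x^2 + y^2)$, which sends the plane bijectively onto the paraboloid $\mathcal{P} := \{z = x^2+y^2\}$ and whose essential property is that four distinct points in $\C^2$ are concyclic if and only if their images under $\phi$ are coplanar in $\C^3$. I would set $C' := \phi(C) \subset \mathcal{P}$ and $S' := \phi(S)$. Since $\phi$ is injective, $|S'| = n$; and since $C'$ is cut out of $\mathcal{P}$ by the cylinder over $C$ (a surface of degree $d$), B\'ezout gives $\deg C' \le 2d$. The projection $\pi: (x,y,z) \mapsto (x,y)$ restricts to a bijection $C' \to C$ with inverse $\phi$.

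I would then apply Theorem~\ref{thm:coplanarintro} to $C'$ and $S'$: either the number of proper coplanar quadruples in $S'$ is $O(n^{8/3})$, or $C'$ contains a planar curve or a curve of degree four. In the non-exceptional case, every proper concyclic quadruple of $S$ lifts to a proper coplanar quadruple of $S'$, so there are at most $O(n^{8/3})$ proper concyclic quadruples in $S$. Since every four-point circle determined by $S$ contributes at least one such quadruple, the number of four-point circles is $O(n^{8/3})$, as required.

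It remains to deal with the exceptional cases, which I expect to be the main obstacle: from a special subcurve of $C'$ one must produce a low-degree subcurve of $C$, and the bookkeeping between the degree of a curve on the paraboloid and the degree of its image under $\pi$ has to be done carefully. If $C'$ contains an irreducible planar curve $D'$, then $D'$ lies in $\mathcal{P} \cap \Pi$ for some plane $\Pi$, which is a conic, so $\deg D' \le 2$. The image $D := \pi(D') \subset C$ is an irreducible plane curve; using the fact that $\pi|_{\mathcal{P}}$ is a bijection and that the preimage of a generic line in $\C^2$ is a vertical plane in $\C^3$, one sees that $\deg D = \deg D' \le 2$. If instead $C'$ contains an irreducible space quartic $D'$, the same degree-preservation argument produces a plane curve $D = \pi(D') \subset C$ of degree $4$. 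In either case, $C$ contains a curve of degree at most $4$, completing the proof.
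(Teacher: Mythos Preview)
Your proof is correct and takes essentially the same route as the paper's: lift to the paraboloid via $\phi(x,y)=(x,y,x^2+y^2)$, apply Theorem~\ref{thm:coplanarintro} to $\phi(C)$, and in the exceptional cases project back down and bound the degree of the resulting component of $C$. One small correction: the equality $\deg D = \deg D'$ that you assert does not hold in general---for instance, the lift of a line in $\C^2$ is a space conic, so $\deg D'=2$ while $\deg D=1$---because vertical planes are not generic for $D'$; what is true, and all you actually need, is the inequality $\deg D \le \deg D'$, which follows since the $\pi$-preimage of a generic line is a (vertical) plane and any plane meets $D'$ in at most $\deg D'$ points.
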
 
\begin{proof}
Let $\varphi :\C^2\to \C^3$ be the map defined by $\varphi(x,y) = (x,y,x^2+y^2)$.
Under $\varphi$, 
a circle in $\C^2$ corresponds to an intersection of the paraboloid $\varphi(\C^2)=Z(z - x^2-y^2)$ 
with a non-vertical plane (intersections with vertical planes correspond to lines in $\C^2$).
Thus four points $p,q,r,s$ in $\C^2$ lie on a common circle if and only if 
$\varphi(p),\varphi(q),\varphi(r),\varphi(s)$ lie on a non-vertical plane.

The image $\varphi(C)$ is an algebraic curve in $\C^3$ of degree at most $2d$.
By Theorem \ref{thm:coplanarintro},
$\varphi(S)$ spans $O(n^{8/3})$ coplanar quadruples, and thus $S$ spans $O(n^{8/3})$ four-point circles,
unless $C$ contains a curve $C'$ such that $\varphi(C')$ is planar or quartic.
If $\varphi(C')$ is planar, then $C'$ is a circle or a line.
If $\varphi(C')$ is quartic, 
then, since $C'$ is the image of $\varphi(C')$ under the projection $(x,y,z)\mapsto (x,y)$, $C'$ has degree at most four.
\end{proof}

\end{document}